\documentclass[1p]{elsarticle}

\usepackage{amssymb}
\usepackage{amsthm}
\usepackage{mathrsfs,amsmath}
\usepackage{dsfont}

\theoremstyle{definition}

\newtheorem{theorem}{Theorem}[section]
\newtheorem{definition}[theorem]{Definition}
\newtheorem{lemma}[theorem]{Lemma}
\newtheorem{corollary}[theorem]{Corollary}
\newtheorem{proposition}[theorem]{Proposition}

\newtheorem{acknowledgments}[theorem]{Ackowledgments}

\newtheorem{problem}[theorem]{Problem}

\DeclareMathOperator{\Con}{Con}
\DeclareMathOperator{\Aut}{Aut}

\DeclareMathOperator{\Pol}{Pol}
\DeclareMathOperator{\Cg}{Cg}

\DeclareMathOperator{\id}{id}

\begin{document}

\title{Gr\"{u}n's Lemma for Semiassociative Mal'cev Algebras}
\author{Alexander Wires}
\ead{awires81@uestc.edu.cn}
\address{School of Mathematical Sciences, University of Electronic Science and Technology of China, Chengdu, 611731, Sichuan, PRC}

\begin{abstract}
We extend recent work on analogues of Gr\"{u}n's Lemma from skew-braces to all semiassociative Mal'cev algebras which is a broad class of algebras encompassing all expansions of groups. The direct analogue of Gr\"{u}n's Lemma is known to fail for skew-braces; thus, the more general result is a characterization for perfect algebras of when the upper-central series stabilizes at the second-center. The proof is an application of the description of generating sets for commutators in Mal'cev algebras.
\end{abstract}

\begin{keyword}
Semiassociative Mal'cev algebra \sep perfect algebra \sep second-center \sep skew brace \sep commutator \\
\MSC[2020] 08A30 \sep 20F14 \sep 17B05
\end{keyword}

\maketitle


\section{Introduction}

Grun's Lemma \cite[Satz 4]{Grun} is the result that for a perfect group, the upper central series stabilizes at the second-center; that is, $Z_{2}(G) = Z(G)$. For other algebras, we may say an unconditional Gr\"un's lemma holds if the hypothesis that the algebra is perfect is sufficient to guarantee that the second-center coincides with the center. In the case of skew braces $\left\langle A, \cdot, \circ \right\rangle$, it is known that an unconditional Gr\"un's lemma fails; however, the details are complicated by the fact that skew braces naturally admit competing notions of commutators and centrality due to the presence of the two group operations. Let $A \ast A$ (see \cite[Prop 2.1]{cedoven}) denote the ideal generated by the derived operation $x \ast y = x^{-1} \cdot (x \circ y) \cdot y^{-1}$ and consider the possible candidates for a center given by
\begin{align*}
\mathrm{Soc}(A) &= \{a \in A : \forall x \in A,  a \ast x = 1 \} \cap Z(A,\cdot), \\
Z(A) &= \mathrm{Ann}(A) = \mathrm{Soc}(A) \cap Z(A,\circ),
\end{align*}
the \emph{socle} and \emph{annihilator} (or algebraic center), respectively. It is known from Ced\'o, Smoktunowicz, L. Vendramin \cite[Sec 3]{cedoven} that there are skew braces with $A \ast A = A$, but $\mathrm{Soc}(A/\mathrm{Soc}(A)) \neq 1$; furthermore, Tsang \cite[Sec 3]{Tsang} develops a stronger example of a skew brace with $A \ast A = A$, but $Z(A/Z(A)) \neq 1$.

Interestingly, it is possible to characterize in this case when the second-center and center coincide. Through a sequence of elegant equational derivations (see \cite[Thm 1.6, Cor 2.6]{Tsang}), it is shown that for a skew brace with $A \ast A = A$, the equality $Z_{2}(A) = Z(A)$ can be characterized by the condition that conjugation $i_{a}(x) = a \circ x \circ \bar{a} \in \Aut (A,\cdot)$ for all $a \in Z_{2}(A)$. A more natural choice for the commutator is the ideal $[A,A]^{A}$ generated by $x \ast y$ and the commutators in the two group operations (see Bourn, Facchini, Pompii \cite{SKB}). Recent work of Ballester-Bolinches, Esteban-Romero, Kurdachenko, and Pérez-Altarriba \cite[Thm A]{many1} provide an improvement to the characterization of $Z_{2}(A) = Z(A)$ in \cite{Tsang} by weakening the assumption to skew braces which are \emph{perfect} in the sense that $[A,A]^{A} = A$. Skew braces satisfy a conditional Gr\"un's Lemma in that for perfect skew braces, there is a characterization of when the upper central series stabilizes at the second-center. In this paper, we present a considerable extension to this line of reasoning by proving a conditional Grun's Lemma for all semiassociative Mal'cev algebras (Theorem~\ref{thm:grun}).

Semiassociative Mal'cev algebras form a natural class of algebras encompassing all expanded groups, including multilinear algebras and multiplicative Lie algebras, and less well-studied algebras in this context like conformal algebras by the work of J.D.H. Smith \cite{smith}. They also generate varieties which are important examples of semi-abelian categories, and more generally, protomodular categories which are central objects in categorical algebra (see Borceux and D. Bourn \cite{protomod} and references therein).

The present approach to a general Grun's Lemma is based on that aspect of \cite{Tsang} and \cite{many1} which is closest to the original proof for groups and calculations with the Lie bracket as a generalized commutator operation: (1) a description of generating sets for the commutator and description of the center in terms of binary absorbing terms; (2) elements of the second-center distribute over commutator operations.  Observing this, we are able to efficiently leverage the generators of higher commutators developed for Mal'cev algebras in Aichinger and Mudrinski \cite{supernil} and the basic properties of commutators in Mal'cev varieties. As a preliminary step, in Proposition~\ref{lem:comgensub} we extend to semiassociative Mal'cev algebras the description of the ideal associated to a commutator which was given in \cite[Cor 6.12]{supernil} only for expanded groups. What is gained by an expansive generality in Theorem~\ref{thm:grun} is at the cost of the characterization in Theorem~\ref{thm:grun}(G2) which relies on the choice of generating sets for the commutator; however, though it complicates the preamble of the theorem, it provides a framework for calculations in any particular algebra. As an illustration, groups (Corollary~\ref{cor:grungrp}) and multiplicative Lie algebras (Corollary~\ref{cor:14}) satisfy an unconditional Gr\"un's Lemma. We point out that a preliminary analogue of Grun's Lemma for multiplicative Lie algebras was given in \cite[Prop 2.5]{bak} for notions of partial centrality. The conditional \cite[Thm A]{many1} is rederived (Corollary~\ref{cor:perfgrun}). This provides an illustration for Problem~\ref{prob:2} which situates \cite[Thm A]{many1} (Corollary~\ref{cor:perfgrun}) in a larger context in which the characterization in Grun's Lemma is refined by the calculations germain to the special identities of the particular algebras under focus.


\section{Mal'cev Algebras}\label{section:2}

To set notation, the block of the congruence $\alpha \in \Con A$ which contains the element $u \in A$ is denoted by $u/\alpha$. Membership of a pair $(a,b) \in \alpha$ in a congruence may also be written as $a \mathrel{\alpha} b$; consequently, the notation $\bar{a} \mathrel{\alpha} \bar{b}$ for tuples $\bar{a},\bar{b} \in A^{n}$ means $a_{i} \mathrel{\alpha} b_{i}$ for each coordinate. A \emph{polynomial} $p$ of an algebra $A$ is an operation which results from the partial evaluation of a term; that is, of the form $p(x_{1},\ldots,x_{k}) = t(x_{1},\ldots,x_{k},c_{1},\ldots,c_{n-k})$ for a term $t(x_{1},\ldots,x_{n})$ and some fixed constants $c_{1},\ldots,c_{n-k} \in A$. The set of polynomials of arity $k$ is denoted by $\Pol_{k} A$.

\begin{definition}
An algebra $A$ is a \emph{Mal'cev algebra} if there is a term operation $m(x,y,z)$ satisfying the identities 
\begin{align}
y &= m(y,x,x) & y &= m(x,x,y). 
\end{align}
The Mal'cev algebra is \emph{semiassociative} if the term $m(x,y,z)$ satisfies the additional identity
\begin{align}\label{eqn:1a}
m(m(x,y,z),z,w) &= m(x,y,w). 
\end{align}
\end{definition}

If we fix an element $u \in A$ and define $x +_{u} y := m(x,u,y)$ and $x \mathrel{-_{u}} y := m(x,y,u)$, then the identity Eq~\eqref{eqn:1a} becomes 
\begin{align*}
(x -_{u} y ) +_{u} y = x  \quad \quad \text{ and } \quad \quad (x +_{u} y) -_{u} y = x
\end{align*}
which suggests a form of nonassociative cancellation. For any $a \neq b \in A$ and congruence $\alpha \in \Con A$, the above identities imply the map 
\begin{align*}
a/\alpha \ni x &\longmapsto m(x,a,b) \in b/\alpha 
\end{align*}
is a bijection between the congruence blocks with inverse $x \mapsto m(x,b,a)$; consequently, in semiassociative Mal'cev algebras congruences are completely determined by any of their congruence blocks.

If $A$ has an idempotent element $u \in A$, then for any congruence $\alpha \in \Con A$, the block $u/\alpha$ is a subalgebra called the \emph{ideal associated to} $\alpha$ and carries additional closure properties (consult \cite[Thm 3.23]{protomod} where the terminology of a normal subalgebra is used); conversely, an ideal $I$ determines a congruence $\alpha_{I} = \{ (a,b) : m(a,b,u) \in I \}$ in which $I = u/\alpha$. Theorem~\ref{thm:grun}(Grun's Lemma) does not require the presence of idempotents but it is mentioned here since it is an important part of the theory of groups with multiple operators which concern the various examples mentioned at the end of Section~\ref{section:3}.

Let us recall the notion of centrality for algebras with finitary operations.

\begin{definition}
For $\alpha,\beta,\gamma \in \Con A$, the relation $C(\alpha,\beta;\gamma)$ holds in $A$ if for all terms $t(\bar{x},\bar{y})$, tuples $\bar{a} \mathrel{\alpha} \bar{b}$, $\bar{c} \mathrel{\beta} \bar{d}$, the implication
\[
t(\bar{a},\bar{c}) \mathrel{\gamma} t(\bar{a},\bar{d}) \longrightarrow t(\bar{b},\bar{c}) \mathrel{\gamma} t(\bar{b},\bar{d})
\]
is true. When $C(\alpha,\beta;\gamma)$ holds we say $\alpha$ \emph{centralizes} $\beta$ modulo $\gamma$ and write $A \vDash C(\alpha,\beta;\delta)$.  
\end{definition}

There are several basic properties of the centralizer relation \cite[Thm 2.19]{shape}; in particular, if $A \vDash C(\alpha,\beta;\gamma_{i})$ for $\{\gamma_{i} : i \in I \}$, then $A \vDash C(\alpha,\beta; \bigwedge_{i \in I} \gamma_{i})$.

\begin{definition}
The TC-commutator of $\alpha,\beta \in \Con A$ is $[\alpha,\beta] := \delta$ if $\delta$ is the smallest congruence such that $A \vDash C(\alpha,\beta;\delta)$.  
\end{definition}

For general algebras, the commutator satisfies only weak properties connected to the ordering of congruences; however, it is closed under arbitrary joins in the first coordinate. This yields the existence of centers for algebras and its characterization in Lemma~\ref{lem:center}.

\begin{definition}
The \emph{centralizer} of $\alpha \in \Con A$ is the largest congruence $\alpha^{\ast} \in \Con A$ such that $[ \alpha^{\ast}, \alpha ] = 0$. The \emph{center} of $A$ is the centralizer $\zeta_{A} := 1^{\ast}_{A}$ of the total congruence. 
\end{definition}

\begin{lemma}\cite[Prop 7.17]{bergman}\label{lem:center}
For any algebra $A$, 
\begin{align}
\zeta_{A} = \{ (a,b) \in A^{2} : \forall t(x,\bar{y}), \forall \bar{c},\bar{d}, A \vDash t(a,\bar{c}) = t(a,\bar{d}) \longrightarrow t(b,\bar{c}) = t(b,\bar{d}) \}.
\end{align}
\end{lemma}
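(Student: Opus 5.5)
We will show that the set on the right-hand side, call it $Z$, is exactly $\zeta_A = 1_A^{\ast}$. The plan has four steps: show $Z$ is a congruence, show it centralizes $1_A$ modulo $0_A$, and show that every congruence centralizing $1_A$ lies inside $Z$. Maximality then forces $\zeta_A = Z$.

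\emph{Step 1: $Z$ is an equivalence relation.} Reflexivity is trivial. Symmetry is not automatic, since the defining implication goes from $a$ to $b$. To reverse it, suppose $t(b,\bar c)=t(b,\bar d)$ but $t(a,\bar c)\neq t(a,\bar d)$. Consider the term $s(x,\bar y,\bar z) = t(x,\bar y)$ with extra dummy variables, and apply the definition with the tuples swapped appropriately. The standard trick is to note that the implication for all $t,\bar c,\bar d$ is equivalent to the statement $t(a,\bar c)=t(a,\bar d)\iff t(b,\bar c)=t(b,\bar d)$. The backward direction should follow by applying the condition to the same term in a suitable form. If this turns out not to be directly obtainable in full generality, I would instead argue that $C(\theta,1;0)$ for a congruence $\theta$ is symmetric in its first argument's pairs, because congruences are symmetric, and use Step 3 to identify $Z$ with the union of such relations. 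Transitivity is immediate by chaining implications.

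\emph{Step 2: $Z$ is compatible, hence a congruence.} Given a basic operation $f$ and pairs $(a_i,b_i)\in Z$, change one coordinate at a time. For a single coordinate, $(f(\ldots,a,\ldots),f(\ldots,b,\ldots))\in Z$ because any term applied to $f(\ldots,x,\ldots)$ is again a term in $x$ with the other entries absorbed into the parameters $\bar c,\bar d$. Transitivity from Step 1 then handles all coordinates together.

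\emph{Step 3: $C(Z,1;0)$ holds.} This is the main obstacle, since the centralizer condition allows tuples $\bar a \mathrel{Z} \bar b$ rather than single elements. I would handle it by induction on the length of the tuple, replacing $a_i$ by $b_i$ one coordinate at a time. At each stage, freeze the remaining coordinates as parameters placed inside $\bar c$ and $\bar d$ (the same values on both sides) and apply the defining implication of $Z$.

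\emph{Step 4: maximality.} Conversely, let $\theta$ be a congruence with $C(\theta,1;0)$. For $(a,b)\in\theta$, the single-variable instance of the centralizer condition, with $\bar c \mathrel{1} \bar d$ arbitrary, is exactly the defining implication of $Z$. Hence $\theta\subseteq Z$. Combined with Steps 1–3 and the fact that commutators are closed under joins in the first coordinate, this gives $\zeta_A=Z$.
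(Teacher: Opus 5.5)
Your Steps 2--4 are fine: compatibility follows by substituting $f(\ldots,x,\ldots)$ into a term, $C(Z,1;0)$ follows by replacing one coordinate at a time with the others frozen as common parameters, and $\theta\subseteq Z$ holds for every $\theta$ with $C(\theta,1;0)$. Step 1, however, has a genuine gap. It is not just a missing trick: with the one-sided implication, $Z$ is in general only a compatible quasi-order and need not be symmetric.

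Take the two-element semilattice $A=\langle\{0,1\},\wedge\rangle$. Every term $t(x,\bar y)$ in which $x$ occurs is constantly $0$ at $x=0$, and terms in which $x$ does not occur satisfy the condition trivially. Hence $(1,0)\in Z$. But $t(x,y)=x\wedge y$ with $c=0$, $d=1$ gives $t(0,0)=t(0,1)$ while $t(1,0)\neq t(1,1)$, so $(0,1)\notin Z$. The same witness shows that $C(1_A,1_A;0_A)$ fails, so $\zeta_A=0_A\neq Z$. The displayed equality is therefore false for arbitrary algebras.

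Your fallback does not rescue this; it is circular. Step 4 gives $\bigcup\{\theta: C(\theta,1;0)\}\subseteq Z$. The reverse inclusion is exactly what Step 3 would provide, but only once $Z$ is known to be a congruence, i.e.\ once symmetry is already known. In the example, no congruence containing $(1,0)$ centralizes $1_A$.

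There are two ways to repair this.
\begin{enumerate}
\item Use the biconditional $t(a,\bar c)=t(a,\bar d)\leftrightarrow t(b,\bar c)=t(b,\bar d)$ in the defining condition, as in the usual definition of the center. Then symmetry holds by definition, and your argument goes through verbatim for any algebra.
\item Work in a Mal'cev algebra, which is the only setting where the paper uses the lemma. There the one-sided condition already implies the reverse one. Given $(a,b)\in Z$ and $t(b,\bar c)=t(b,\bar d)$, apply the defining condition for $(a,b)$ to the term $q(x,\bar y,\bar z,w)=m(t(x,\bar y),t(x,\bar z),w)$, with parameter tuples $(\bar c,\bar d,t(a,\bar d))$ and $(\bar d,\bar d,t(a,\bar c))$.
\begin{itemize}
\item At $x=a$, both evaluate to $t(a,\bar c)$ by the two Mal'cev identities.
\item Hence they agree at $x=b$, which gives $m(t(b,\bar c),t(b,\bar d),t(a,\bar d))=m(t(b,\bar d),t(b,\bar d),t(a,\bar c))=t(a,\bar c)$.
\item Since $t(b,\bar c)=t(b,\bar d)$, the left side equals $t(a,\bar d)$, so $t(a,\bar d)=t(a,\bar c)$.
\end{itemize}
\end{enumerate}
The second repair is the missing idea for Step 1 in the paper's context. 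It uses the Mal'cev term essentially, which is consistent with the semilattice counterexample.
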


The main role for a Mal'cev operation given centralizing congruences is presented by the following.

\begin{lemma}\label{lem:11}\cite[Thm 3.8]{Kiss}
Let $A \in \mathcal V$ be a Mal'cev variety. Let $p \in \Pol A$ and $\bar{a} \mathrel{\alpha} \bar{b} \mathrel{\beta} \bar{c}$. If $[\alpha , \beta] = 0$, then   
\[
m(p(\bar{a}),p(\bar{b}),p(\bar{c})) = p(m(a_{1},b_{1},c_{1}),m(a_{2},b_{2},c_{2}),\ldots,m(a_{n},b_{n},c_{n})) .
\]
\end{lemma}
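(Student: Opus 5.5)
The plan is to reduce the identity to a single application of the centrality condition, using the polynomial $p$ composed with the Mal'cev operation. Write $u_{i} = m(a_{i},b_{i},c_{i})$ and $\bar{u} = (u_{1},\ldots,u_{n})$. Consider the two $2n$-ary polynomials
\begin{align*}
S(\bar{x},\bar{y}) &= p\bigl(m(x_{1},b_{1},y_{1}),\ldots,m(x_{n},b_{n},y_{n})\bigr), & T(\bar{x},\bar{y}) &= m\bigl(p(\bar{x}),p(\bar{b}),p(\bar{y})\bigr),
\end{align*}
in which the $b_{i}$ are fixed constants. The first argument ranges over the $\alpha$-related tuples $\bar{b} \mathrel{\alpha} \bar{a}$, and the second over the $\beta$-related tuples $\bar{b} \mathrel{\beta} \bar{c}$. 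The claim is exactly $S(\bar{a},\bar{c}) = T(\bar{a},\bar{c})$.

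Using only the Mal'cev identities, I check the other three corners of the square:
\begin{align*}
S(\bar{b},\bar{b}) &= p(\bar{b}) = T(\bar{b},\bar{b}), & S(\bar{b},\bar{c}) &= p(\bar{c}) = T(\bar{b},\bar{c}), & S(\bar{a},\bar{b}) &= p(\bar{a}) = T(\bar{a},\bar{b}).
\end{align*}
So $S$ and $T$ agree at three corners of the $\alpha$-by-$\beta$ square. It remains to transfer this agreement to the fourth corner $(\bar{a},\bar{c})$.

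This transfer is the \emph{two-term condition}: if $f(\bar{b},\bar{y}) = g(\bar{b},\bar{y})$ for both $\bar{y} \in \{\bar{b},\bar{c}\}$ and $f(\bar{a},\bar{b}) = g(\bar{a},\bar{b})$, then $f(\bar{a},\bar{c}) = g(\bar{a},\bar{c})$. I expect this to be the main obstacle. The definition of $C(\alpha,\beta;0)$ only gives the one-term version, and the naive encoding $t = m(f,g,\text{const})$ leaves a residual equation $m(x,T,p(\bar{a})) = p(\bar{a})$, from which $x = T$ does not follow without further cancellation.

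To get the two-term condition I would invoke that Mal'cev varieties are congruence modular. In that setting the term-condition commutator coincides with the two-term (modular) commutator; this is the standard result of Kearnes--Kiss / Freese--McKenzie, proved via Gumm's shifting lemma and the congruence $\Delta_{\alpha,\beta}$. Concretely, $[\alpha,\beta]=0$ implies that $\Delta_{\alpha,\beta}$, the congruence on $\alpha$ generated by the diagonal pairs $((x,x),(y,y))$ with $x \mathrel{\beta} y$, has trivial intersection with the kernels relevant to the square. The shifting lemma then closes the square. Failing a citation, I would try to build the two-term condition directly from the one-term version. The idea is to apply $C(\alpha,\beta;0)$ and its symmetric counterpart $C(\beta,\alpha;0)$ to the auxiliary polynomial $m(f(\bar{x},\bar{y}), g(\bar{x},\bar{y}), g(\bar{x},\bar{b}))$. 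Here $C(\beta,\alpha;0)$ is available because in modular varieties $[\alpha,\beta] = [\beta,\alpha]$. Once the two-term condition holds, applying it to $f = S$ and $g = T$ yields $p(\bar{u}) = m(p(\bar{a}),p(\bar{b}),p(\bar{c}))$, as required.
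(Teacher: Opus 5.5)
The paper gives no proof of this lemma; it only cites Kiss (Thm 3.8). Your argument is correct. The polynomials $S$ and $T$ are right, the claim is indeed $S(\bar a,\bar c)=T(\bar a,\bar c)$, and the three corner computations check out. The two-term condition does follow from $[\alpha,\beta]=0$ here, so citing the equivalence of the term and two-term conditions in modular varieties closes the proof. Your fallback also works, provided the second step is $C(\beta,\alpha;0)$ applied to $m(x,y,p(\bar a))$: switch $y$ between the $\beta$-related elements $T(\bar a,\bar c)$ and $p(\bar a)$, while $x$ ranges over the $\alpha$-related pair $S(\bar a,\bar c)$, $T(\bar a,\bar c)$.

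However, the obstacle you single out is not real, and none of that machinery is needed: not modularity, not $\Delta_{\alpha,\beta}$, not shifting, not symmetry of the commutator. The constant in $m(f,g,\cdot)$ is yours to choose, so take it to be the value at the missing corner, $k:=T(\bar a,\bar c)$. Set $t(\bar x,\bar y):=m\bigl(S(\bar x,\bar y),T(\bar x,\bar y),k\bigr)$. Because $S(\bar b,\bar y)=p(\bar y)=T(\bar b,\bar y)$, you get $t(\bar b,\bar b)=k=t(\bar b,\bar c)$. Then $C(\alpha,\beta;0)$ gives $t(\bar a,\bar b)=t(\bar a,\bar c)$, that is,
\[
k=m\bigl(p(\bar a),p(\bar a),k\bigr)=m\bigl(S(\bar a,\bar c),k,k\bigr)=S(\bar a,\bar c),
\]
which is the claim. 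The same choice of constant, $g(\bar a,\bar c)$, derives the two-term condition from the one-term condition in any Mal'cev algebra.

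This version uses only the Mal'cev identities and one application of the definition of $C(\alpha,\beta;0)$. It holds for any single Mal'cev algebra. Your route instead imports the considerably deeper equivalence of the term and two-term conditions in congruence modular varieties.
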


The cornerstone of the characterization in Gr\"un's lemma is in certain very useful sets which generate the commutator as a congruence in general Mal'cev algebras. The proof of this for binary and higher commutators constitutes a major part of the development in \cite{supernil}.

\begin{definition}
An operation $f : A^{n} \rightarrow A$ \emph{absorbs} the tuple $(a_{1},\ldots,a_{n})$ to $u \in A$ if $f(x_{1},\ldots,x_{n}) = u$ whenever some $x_{i} = a_{i}$. 
\end{definition}

In the following, we observe that commutators are generated by values of binary polynomials which are absorbing when restricted to coordinate squares.

\begin{lemma}\cite[Lem 6.9]{supernil}\label{lem:comgen1}
Let $A$ be a Mal'cev algebra and $\alpha_{1},\alpha_{2} \in \Con A$. Then $[\alpha_{1},\alpha_{2}]$ is generated as a congruence by the set
\begin{align*}
R(\alpha_{1},\alpha_{2}) &= \{ ( f(b_{1},b_{2}) , f(a_{1},a_{2}) ) : a_{i} \mathrel{\alpha_{i}} b_{i}, f \in \Pol_{2} A \text{ absorbs } (a_{1},a_{2}) \};
\end{align*}
that is, $[\alpha_{1},\alpha_{2}] = \Cg^{A}(R)$. 
\end{lemma}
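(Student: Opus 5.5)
We prove the two inclusions $\Cg^{A}(R) \leq [\alpha_{1},\alpha_{2}]$ and $[\alpha_{1},\alpha_{2}] \leq \Cg^{A}(R)$, where $R = R(\alpha_{1},\alpha_{2})$.

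\emph{First inclusion.} Write $\gamma = [\alpha_{1},\alpha_{2}]$, so that $A \vDash C(\alpha_{1},\alpha_{2};\gamma)$. Take a pair $(f(b_{1},b_{2}),f(a_{1},a_{2})) \in R$ with $f(x,y) = t(x,y,\bar{e})$ for a term $t$. Absorption gives $f(a_{1},a_{2}) = f(a_{1},b_{2})$, hence trivially $f(a_{1},a_{2}) \mathrel{\gamma} f(a_{1},b_{2})$. Apply the term condition with first-coordinate tuple $(a_{1},\bar{e}) \mathrel{\alpha_{1}} (b_{1},\bar{e})$ and second-coordinate pair $a_{2} \mathrel{\alpha_{2}} b_{2}$. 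This yields $f(b_{1},a_{2}) \mathrel{\gamma} f(b_{1},b_{2})$. Absorption again gives $f(b_{1},a_{2}) = f(a_{1},a_{2})$, so $f(b_{1},b_{2}) \mathrel{\gamma} f(a_{1},a_{2})$. Hence $R \subseteq \gamma$, and therefore $\Cg^{A}(R) \leq \gamma$.

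\emph{Second inclusion.} Let $\delta = \Cg^{A}(R)$. It suffices to show $A \vDash C(\alpha_{1},\alpha_{2};\delta)$, since then minimality of the commutator gives $\gamma \leq \delta$. Suppose $t(\bar{a},\bar{c}) \mathrel{\delta} t(\bar{a},\bar{d})$ with $\bar{a} \mathrel{\alpha_{1}} \bar{b}$ and $\bar{c} \mathrel{\alpha_{2}} \bar{d}$.

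\begin{enumerate}
\item First reduce to single variables. In Mal'cev algebras the term condition is equivalent to its one-variable polynomial version: replace $t$ by a polynomial $p(x,y)$ in which $x$ and $y$ range over $A^{n}$ and $A^{m}$. Concretely, the tuple $\bar{a}$ can be encoded via a unary polynomial into a single element path. A cleaner route is to use the standard Mal'cev fact that $C(\alpha,\beta;\delta)$ holds iff $m(p(\bar{b},\bar{d}),p(\bar{b},\bar{c}),p(\bar{a},\bar{c})) \mathrel{\delta} p(\bar{a},\bar{d})$ for all polynomials $p$ and tuples as above.
\item With this criterion, define the polynomial
\[
f(\bar{x},\bar{y}) = m\bigl(p(\bar{x},\bar{y}),\, p(\bar{x},\bar{c}),\, m(p(\bar{a},\bar{c}),p(\bar{a},\bar{y}),p(\bar{a},\bar{c}))\bigr),
\]
or more simply
\[
g(\bar{x},\bar{y}) = m\bigl(m(p(\bar{x},\bar{y}),p(\bar{x},\bar{c}),p(\bar{a},\bar{c})),\, p(\bar{a},\bar{y}),\, p(\bar{a},\bar{c})\bigr).
\]
The Mal'cev identities make $g$ constant equal to $p(\bar{a},\bar{c})$ whenever $\bar{x} = \bar{a}$ or $\bar{y} = \bar{c}$; that is, $g$ absorbs $(\bar{a},\bar{c})$.
\item The multivariable absorbing polynomial has to be reduced to a binary one to land in $R$. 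To do this, change one coordinate at a time: interpolate $\bar{a} = \bar{a}^{0}, \bar{a}^{1}, \ldots, \bar{a}^{n} = \bar{b}$, and similarly for $\bar{c}$ to $\bar{d}$. At each step $g$ restricts to a binary polynomial absorbing a pair, with the earlier steps accounted for modulo $\delta$. Then $g(\bar{b},\bar{d}) \mathrel{\delta} p(\bar{a},\bar{c})$ follows by a Mal'cev telescoping argument. Translating $g(\bar{b},\bar{d}) \mathrel{\delta} p(\bar{a},\bar{c})$ back using the hypothesis $p(\bar{a},\bar{c}) \mathrel{\delta} p(\bar{a},\bar{d})$ and the Mal'cev identities modulo $\delta$ gives $p(\bar{b},\bar{c}) \mathrel{\delta} p(\bar{b},\bar{d})$.
\end{enumerate}

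The main obstacle is step 3: the passage from multi-coordinate absorption to binary absorbing polynomials, together with the telescoping. The interpolated binary polynomials absorb only up to $\delta$, not exactly, so they must be re-corrected with $m$ at each step to become exactly absorbing. If this becomes unwieldy, the fallback is to pass to $A/\delta$. There the images of all binary absorbing polynomials satisfy $f(b_{1},b_{2}) = f(a_{1},a_{2})$, and one shows $[\alpha_{1},\alpha_{2}] = 0$ in the quotient using the Mal'cev two-term-condition characterization inductively on the number of variables.
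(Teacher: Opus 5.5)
Your first inclusion is correct. Absorption gives $f(a_1,a_2)=f(a_1,b_2)$, and the term condition for $[\alpha_1,\alpha_2]$, with the constants $\bar e$ placed on the $\alpha_1$ side, yields $f(b_1,a_2)\mathrel{[\alpha_1,\alpha_2]}f(b_1,b_2)$; absorption again finishes. The paper gives no proof of this lemma, since it is imported from Aichinger and Mudrinski, so your reverse inclusion has to stand on its own. There it has two genuine gaps.

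\emph{Step 3 is announced rather than carried out, and its stated mechanism fails.} Freezing the interpolated coordinates $(b_1,\ldots,b_{i-1},x,a_{i+1},\ldots)$ and $(d_1,\ldots,d_{j-1},y,c_{j+1},\ldots)$ does not give a polynomial absorbing $(a_i,c_j)$, so nothing lands in $R$. What is needed is an induction on the number $k+l$ of variables, for the claim: if $g(\bar a,\bar y)=g(\bar x,\bar c)=u$ for all $\bar x,\bar y$, then $g(\bar b,\bar d)\mathrel{\delta}u$. The base case $k=l=1$ is $R$ itself. For $k\ge 2$ write $\bar x=(x_1,\bar x')$.
\begin{enumerate}
\item The polynomial $g(a_1,\bar x',\bar y)$ absorbs $(\bar a',\bar c)$, so by induction $g(a_1,\bar b',\bar d)\mathrel{\delta}u$.
\item The polynomial $h(x_1,\bar y):=m(g(x_1,\bar b',\bar y),g(a_1,\bar b',\bar y),u)$ absorbs $(a_1,\bar c)$ exactly and has $1+l<k+l$ variables.
\item Hence $g(\bar b,\bar d)=m(g(\bar b,\bar d),u,u)\mathrel{\delta}h(b_1,\bar d)\mathrel{\delta}u$.
\end{enumerate}
The case $k=1<l$ is symmetric. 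The point is that the $m$-correction uses exact values, so absorption stays exact and the $\delta$-error appears only at the evaluation point.

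\emph{The final translation fails for your $g$.} Using $p(\bar a,\bar c)\mathrel{\delta}p(\bar a,\bar d)$ you get $g(\bar b,\bar d)\mathrel{\delta}m(p(\bar b,\bar d),p(\bar b,\bar c),p(\bar a,\bar c))$. So you only learn $m(p(\bar b,\bar d),p(\bar b,\bar c),p(\bar a,\bar c))\mathrel{\delta}m(p(\bar b,\bar c),p(\bar b,\bar c),p(\bar a,\bar c))$. Concluding $p(\bar b,\bar d)\mathrel{\delta}p(\bar b,\bar c)$ requires cancelling $m(-,y,z)$. Semiassociativity provides this, but the lemma assumes only the Mal'cev identities, and they do not: on $\{0,1,2\}$ the operation with $m(x,y,z)=x$ if $y=z$ and $m(x,y,z)=z$ otherwise is Mal'cev, yet $m(0,1,2)=2=m(1,1,2)$.

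The remedy is to put the absorbing point at the corner you want to control. The polynomial $g(\bar x,\bar y)=m(p(\bar x,\bar c),p(\bar x,\bar y),p(\bar b,\bar y))$ absorbs $(\bar b,\bar c)$ to $p(\bar b,\bar c)$. By the hypothesis, $g(\bar a,\bar d)=m(p(\bar a,\bar c),p(\bar a,\bar d),p(\bar b,\bar d))\mathrel{\delta}p(\bar b,\bar d)$. The claim then gives $C(\alpha_1,\alpha_2;\delta)$ with no cancellation.

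Two smaller points. Your step 1 criterion is never used, and encoding a tuple into one element by a unary polynomial is not valid in general. Your fallback through $A/\delta$ only relocates the missing induction. It also needs the observation that a polynomial absorbing only modulo $\delta$ can be $m$-corrected to an exactly absorbing one agreeing with it modulo $\delta$.
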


In the case of principal congruences, the commutator has an especially simple description without the requirement of congruence generation.

\begin{lemma}\cite[Lem 6.13]{supernil}\label{lem:comgen2}
Let $A \in \mathcal V$ be a Mal'cev variety and $a,b,c,d \in A$. Then 
\begin{align*}
[\theta(a,b),\theta(c,d)] &= \{ ( f(b,d) , f(a,c) ) : f \in \Pol_{2} A \text{ absorbs } (a,c) \}.
\end{align*}
\end{lemma}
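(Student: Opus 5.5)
Write $\alpha = \theta(a,b)$, $\beta = \theta(c,d)$ and let $S$ denote the set on the right-hand side. The plan is to prove two inclusions, $S \subseteq [\alpha,\beta]$ and $R(\alpha,\beta) \subseteq S$, and to show that $S$ is itself a congruence. Lemma~\ref{lem:comgen1} then gives
\[
[\alpha,\beta] = \Cg^{A}(R(\alpha,\beta)) \subseteq S \subseteq [\alpha,\beta].
\]
The first inclusion is immediate. Since $a \mathrel{\alpha} b$ and $c \mathrel{\beta} d$, every pair $(f(b,d),f(a,c))$ with $f$ absorbing $(a,c)$ already lies in $R(\alpha,\beta)$, hence in $[\alpha,\beta]$.

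\emph{$S$ is a congruence.} Fix $u = f(a,c)$ for the relevant absorbing $f$; each construction below is a Mal'cev combination of absorbing polynomials, and we check absorption directly.
\begin{itemize}
\item Reflexivity: use a constant polynomial.
\item Symmetry: given $f$ absorbing $(a,c)$ to $u$, put $g(x,y) = m(f(b,d),f(x,y),u)$. Then $g(a,y) = g(x,c) = f(b,d)$, so $g$ absorbs $(a,c)$, and $g(b,d) = u$. Thus $(u,f(b,d)) = (g(b,d),g(a,c)) \in S$.
\item Transitivity: suppose $(f(b,d),u)$ and $(g(b,d),v)$ lie in $S$ via $f,g$, where $u = g(b,d)$ and $g$ absorbs $(a,c)$ to $v$. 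Put $h(x,y) = m(f(x,y),u,g(x,y))$. Then $h(a,y) = h(x,c) = m(u,u,v) = v$, and $h(b,d) = m(f(b,d),u,u) = f(b,d)$.
\item Compatibility with a basic operation $F$: apply $F$ coordinatewise to the absorbing polynomials $f_{1},\dots,f_{n}$. The resulting $F(f_{1},\dots,f_{n})$ is again a binary polynomial, and it absorbs $(a,c)$ to $F(u_{1},\dots,u_{n})$.
\end{itemize}

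\emph{$R(\alpha,\beta) \subseteq S$.} This is the step needing an outside ingredient, and I expect it to be the main obstacle: the description of principal congruences in Mal'cev algebras, $\theta(a,b) = \{(p(a),p(b)) : p \in \Pol_{1} A\}$. This follows from Mal'cev's chain lemma, since the Mal'cev term collapses chains and symmetrizes pairs. For instance, $p'(x) = m(p(a),p(x),p(b))$ swaps $p(a)$ and $p(b)$, so we may assume both orientations are covered.

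Now take a generator $(f(b_{1},b_{2}),f(a_{1},a_{2}))$ with $f$ absorbing $(a_{1},a_{2})$ to $u$. Write $(a_{1},b_{1}) = (p(a),p(b))$ and $(a_{2},b_{2}) = (q(c),q(d))$, and define
\[
g(x,y) = m\bigl(f(p(x),q(y)),\, f(p(x),q(c)),\, u\bigr).
\]
Then $g(a,y) = m(u,u,u) = u$ and $g(x,c) = m(f(p(x),q(c)),f(p(x),q(c)),u) = u$, so $g$ absorbs $(a,c)$. Moreover $g(b,d) = m(f(b_{1},b_{2}),u,u) = f(b_{1},b_{2})$. Hence the generator equals $(g(b,d),g(a,c)) \in S$.

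Combining the three parts gives $[\alpha,\beta] = S$.
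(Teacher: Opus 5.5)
Your argument is correct. The paper itself gives no proof of this lemma; it imports it from Aichinger--Mudrinski (Lem 6.13). So your write-up is a self-contained derivation of it from Lemma~\ref{lem:comgen1}, together with the standard fact that in Mal'cev algebras $\theta(a,b)=\{(p(a),p(b)) : p\in\Pol_1 A\}$. The cleanest justification of that fact is that this relation is the reflexive compatible relation generated by $(a,b)$, and reflexive compatible relations in Mal'cev algebras are congruences; your chain-collapsing remark is a looser version of the same point.

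Your symmetry step is exactly the trick the paper uses in the proof of Proposition~\ref{lem:comgensub}(1) to show that $R(\alpha_1,\alpha_2)$ is symmetric. What your route adds is transitivity via $h=m(f,u,g)$. This works only because every pair in $S$ is witnessed at the same absorbing point $(a,c)$ and the same evaluation point $(b,d)$. That common base is unavailable for $R(\alpha_1,\alpha_2)$ with arbitrary congruences. It is precisely why the paper can drop congruence generation in Proposition~\ref{lem:comgensub}(2) but must take the transitive closure in part (1).

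One simplification: in your last step the Mal'cev wrapper is unnecessary. Since $p(a)=a_1$ and $q(c)=a_2$, the polynomial $f(p(x),q(y))$ already absorbs $(a,c)$ to $u$, and it takes the value $f(b_1,b_2)$ at $(b,d)$.
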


The following Proposition is a generalization to semiassociative Mal'cev algebras of \cite[Cor 6.12]{supernil} which characterizes the ideal associated to the commutator in expanded groups. The distinction in the generation of the commutator determined by principal congruences is reflected by the absence of the Mal'cev term in generating each commutator block.

\begin{proposition}\label{lem:comgensub}
Let $A$ be a semiassociative Mal'cev algebra. Let $u \in A$.
\begin{enumerate}

	\item If $\alpha_{1},\alpha_{2} \in \Con A$, then $u/[\alpha_{1},\alpha_{2}]$ is generated by the set 
	\begin{align}
	S = \{ c(a_{1},a_{2}) : a_{i} \in u/\alpha_{i}, c \in \Pol_{2} A \text{ absorbs } (u,u) \text{ to } u  \}
	\end{align}
	using the operation $+_{u}$ and $u$.
	
	\item If $a,b,c,d \in A$, then 
	\begin{align}\label{eq:302}
	\begin{split}
	u/[\theta(a,b),\theta(c,d)] &= \{ c(x,y) : x \in u/\theta(a,b),y \in u/\theta(c,d), \\
	&\quad \quad \quad \quad c \in \Pol_{2} A \text{ absorbs } (u,u) \text{ to } u \}
	\end{split}
	\end{align}

\end{enumerate}
\end{proposition}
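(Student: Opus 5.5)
The plan is to transport the pair-descriptions of Lemma~\ref{lem:comgen1} and Lemma~\ref{lem:comgen2} to the block of $u$. The tool is the bijection $x \mapsto m(x,a,u)$ from $a/\gamma$ onto $u/\gamma$, which follows from semiassociativity. The key observation concerns a generating pair $(f(b_1,b_2), f(a_1,a_2))$ with $f$ absorbing $(a_1,a_2)$ to some $v$. Define
\[
c(x_1,x_2) := m\bigl(f(m(x_1,u,a_1),m(x_2,u,a_2)),\,v,\,u\bigr).
\]
Then $c \in \Pol_2 A$, and $c$ absorbs $(u,u)$ to $m(v,v,u)=u$. Put $x_i := m(b_i,a_i,u) \in u/\alpha_i$. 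Then $m(x_i,u,a_i) = m(m(b_i,a_i,u),u,a_i) = m(b_i,a_i,a_i)=b_i$ by \eqref{eqn:1a}, so
\[
c(x_1,x_2) = m(f(b_1,b_2),v,u) = f(b_1,b_2) -_{v} \ldots ,
\]
that is, $c(x_1,x_2)$ is the image of $f(b_1,b_2)$ under the block-translation from $v/\gamma$ to $u/\gamma$. Conversely, every $c$ absorbing $(u,u)$ to $u$, evaluated at $a_i \in u/\alpha_i$, gives a generating pair $(c(a_1,a_2), c(u,u)) = (c(a_1,a_2),u)$. Hence $S \subseteq u/[\alpha_1,\alpha_2]$, and the pairs $(s,u)$ with $s\in S$ are exactly the translates of the generators $R(\alpha_1,\alpha_2)$.

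For part (2), apply this to Lemma~\ref{lem:comgen2}. Take $(f(b,d),f(a,c))$ with $f$ absorbing $(a,c)$. Since this set is already the whole congruence, the element $z=f(b,d)$ lies in the block of $v=f(a,c)$. Moreover, every element of $u/[\theta(a,b),\theta(c,d)]$ is of this form: any pair $(z,u)$ in the commutator is itself such a pair, with $v=u$. For such a pair I take $c$ as above, with $x = m(b,a,u)$ and $y = m(d,c,u)$; this gives $c(x,y) = m(z,u,u) = z$. One must check $x \in u/\theta(a,b)$, which holds since $m(b,a,u)\,\theta(a,b)\,m(a,a,u)=u$, and similarly for $y$. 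The reverse inclusion was shown above. This proves \eqref{eq:302} directly, with no closure under $+_u$ needed.

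For part (1), let $T$ be the closure of $S\cup\{u\}$ under $+_u$; the obstacle is showing $T = u/\gamma$, where $\gamma=[\alpha_1,\alpha_2]$. Since $\gamma$ is the congruence generated by $R$, every $(z,u)\in\gamma$ is reached by a finite chain built from unary polynomial images $(p(r),p(s))$ of generating pairs, combined by transitivity; this is the Mal'cev lemma. I would first check that $S$ is closed under unary polynomial translation. Take a generator pair moved by $p$: $(p f(b_1,b_2), p f(a_1,a_2))$. This is again in $R$, since $p\circ f$ still absorbs $(a_1,a_2)$, so its translate to the $u$-block lies in $S$ by the observation above. Next comes transitivity. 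Suppose $z_0=u, z_1,\ldots,z_n=z$, where each step $(z_{j},z_{j+1})$ is a pair whose $u$-translate is $s_j\in S$, i.e.\ $s_j = m(z_{j+1},z_j,u)$. Using \eqref{eqn:1a} repeatedly, we get $z_{j+1} = m(s_j,u,z_j)$. Writing $w_j=m(z_j,u,u)=z_j$ and telescoping gives $z = (\cdots(s_0 +_u s_1)\cdots) +_u s_{n-1}$ up to reindexing. This telescoping is the delicate calculation, because $+_u$ is not associative. I expect to need a Lemma~\ref{lem:11}-style identity $m(s,u,z_j)$ versus $s +_u z_j$, and symmetric pairs; the latter are handled by noting that $R$ is symmetric up to swapping the roles of the $a_i$ and $b_i$ with an adjusted $f$.
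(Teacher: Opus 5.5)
Your construction of $c$ is exactly the paper's polynomial $h$, and your proof of part (2) is correct and more explicit than the paper's ``follows immediately''. Your plan for part (1) is also the paper's: a Mal'cev chain, closure of $R(\alpha_1,\alpha_2)$ under unary polynomials, and translating each step into $S$. However, part (1) is left unfinished, and the way you propose to finish it is wrong.

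You correctly derive $z_{j+1} = m(s_j,u,z_j)$ from \eqref{eqn:1a}. Since $x +_u y$ is by definition $m(x,u,y)$, this already says $z_{j+1} = s_j +_u z_j$. Unrolling the recurrence gives the right-nested expression
\[
z = s_{n-1} +_u \bigl(s_{n-2} +_u (\cdots +_u (s_0 +_u u)\cdots)\bigr),
\]
which lies in $T$ simply because $T$ contains $S\cup\{u\}$ and is closed under the binary operation $+_u$. No associativity is involved; this is the paper's element $v$. The left-nested form $(\cdots(s_0+_u s_1)\cdots)+_u s_{n-1}$ you wrote does not follow from the recurrence when $+_u$ is non-associative. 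There is also no gap between ``$m(s,u,z_j)$ versus $s+_u z_j$'' to bridge, since they are the same thing. In any case Lemma~\ref{lem:11} would be unavailable: it needs a pair of congruences with zero commutator, and $[\alpha_1,\alpha_2]$ is exactly the nonzero congruence you are computing.

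Second, you genuinely need symmetry of $R(\alpha_1,\alpha_2)$. The chain from Mal'cev's lemma may use pairs in either direction, and $T$ is closed only under $+_u$, not $-_u$, so a reversed step cannot be absorbed any other way. Your phrase ``swapping the roles of the $a_i$ and $b_i$ with an adjusted $f$'' does not give a construction, and it is not what works. The paper keeps the absorbing point fixed and swaps the values. For $f$ absorbing $(a_1,a_2)$, set $g(x,y) := m(f(a_1,a_2),f(x,y),f(b_1,b_2))$. Then $g$ still absorbs $(a_1,a_2)$ (now to $f(b_1,b_2)$) and $g(b_1,b_2)=f(a_1,a_2)$. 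Hence $(f(a_1,a_2),f(b_1,b_2)) = (g(b_1,b_2),g(a_1,a_2)) \in R(\alpha_1,\alpha_2)$. With these two points supplied, your argument is complete and coincides with the paper's.
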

\begin{proof}

(1) Let us first observe that the set $R(\alpha_{1},\alpha_{2})$ in Lemma~\ref{lem:comgen1} is closed under the unary polynomials and contains the diagonal by taking constants. Take a pair $(f(b_{1},b_{2}) , f(a_{1},a_{2})) \in R(\alpha_{1},\alpha_{2})$ for $f \in \Pol_{2} A$ which absorbs $(a_{1},a_{2})$, then define the binary polynomial $g(x,y) := m(f(a_{1},a_{2}),f(x,y),f(b_{1},b_{2}))$. Then $g \in \Pol_{2} A$ absorbs $(a_{1},a_{2})$ to the value $f(b_{1},b_{2})$ and $g(b_{1},b_{2})=f(a_{1},a_{2})$; thus, $(f(a_{1},a_{2}),f(b_{1},f_{2})) \in R(\alpha_{1},\alpha_{2})$ and so $R(\alpha_{1},\alpha_{2})$ is symmetric. It follows that the congruence generated by $R(\alpha_{1},\alpha_{2})$ is just the transitive closure.

Let $S'$ denoted the subset generated by $S$ with the operation $+_{u}$ and $u$. Take $c(a_{1},a_{2}) \in S$ where $c \in \Pol_{2} A$ absorbs $(u,u)$ to $u$. Then $c(u,a_{2}) = u = c(u,u) \rightarrow c(a_{1},a_{2}) \mathrel{[\alpha_{1},\alpha_{2}]} c(a_{1},u)=u$; thus, $S \subseteq u/[\alpha_{1},\alpha_{2}]$, and because $m$ is idempotent we see that $S' \subseteq u/[\alpha_{1},\alpha_{2}]$.

For the reverse inclusion, first consider $(r_{1},r_{2}) = (f(b_{1},b_{2}),f(a_{1},a_{2})) \in R(\alpha_{1},\alpha_{2})$ where $f$ absorbs $(a_{1},a_{2})$. Define the binary polynomial 
\begin{align*}
h(x,y):= m(f(m(x,u,a_{1}),m(y,u,a_{2})),f(a_{1},a_{2}),u).
\end{align*}
Then $h \in \Pol_{2} A$ absorbs $(u,u)$ to $u$ with $u \mathrel{\alpha_{1}} m(b_{1},a_{1},u)$ and $u \mathrel{\alpha_{1}} m(b_{2},a_{2},u)$. So we have by the identity Eq~\eqref{eqn:1a}
\begin{align*}
m(r_{1},r_{2},u) = m(f(b_{1},b_{1}),f(a_{1},a_{2}),u) = h(m(b_{1},a_{1},u),m(b_{2},a_{2},u)) \in S.
\end{align*}
Now take $(r,u) \in [\alpha_{1},\alpha_{2}]$. Then by the previous comments there is a sequence $r = r_{1},r_{2},\ldots,r_{n-1},r_{n}=u$ such that each $(r_{i},r_{i+1}) \in R(\alpha_{1},\alpha_{2})$. We have seen $m(r_{i},r_{i+1},u) \in S$. Set $s_{i} := m(r_{n-i},r_{n - i + 1},u) \in S$ and consider the element defined by the right-associated composition  
\begin{align*}
v := m \bigg( s_{n-1}, u , \cdots   m \Big( s_{3} , u ,m \big( s_{2} ,u , m ( s_{1},u,r_{n} ) \big) \Big) \cdots \bigg) 
\end{align*}
Repeatedly using the identity Eq~\eqref{eqn:1a}, $s_{i} \in S$ and $r_{n}=u$ we see that $r=r_{1} = v \in S'$; thus, $u/[\alpha_{1},\alpha_{2}] \subseteq S'$.

(2) Follows immediately from (1) where we no longer need to consider congruence generation by the set $R(\theta(a,b),\theta(c,d))$.
\end{proof}

The following corollary is a characterization of the center since in semiassociative Mal'cev algebras congruences are determined by any of the blocks.

\begin{corollary}\label{cor:centchar}
Let $A$ be a semiassociative Mal'cev algebra. Then for all $u \in A$, the congruence block of the center containing $u$ is given by 
\begin{align}\label{eqn:10}
u/\zeta_{A} = \{ a \in A : c \in \Pol_{2} A \text{ absorbs } (u,u) \text{ to } u \rightarrow \forall x \in A, c(a,x) = u \}
\end{align}
\end{corollary}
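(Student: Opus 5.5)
The plan is to prove both inclusions in \eqref{eqn:10}. Since $\zeta_{A} = 1_{A}^{\ast}$ is the largest congruence with $[\zeta_{A},1_{A}] = 0$, I will treat the block $u/\zeta_{A}$ through Proposition~\ref{lem:comgensub} with $\alpha_{1} = \zeta_{A}$ and $\alpha_{2} = 1_{A}$. Let $Z_{u}$ denote the right-hand side of \eqref{eqn:10}.

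\emph{Inclusion $u/\zeta_{A} \subseteq Z_{u}$.} Let $a \in u/\zeta_{A}$ and let $c \in \Pol_{2} A$ absorb $(u,u)$ to $u$. For any $x \in A$ we have $x \in u/1_{A}$. By Proposition~\ref{lem:comgensub}(1), the element $c(a,x)$ lies in $S \subseteq u/[\zeta_{A},1_{A}] = u/0 = \{u\}$. Hence $c(a,x) = u$. Alternatively, this follows directly from Lemma~\ref{lem:center}: since $c(u,x) = u = c(u,u)$ and $(u,a) \in \zeta_{A}$, we get $c(a,x) = c(a,u) = u$.

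\emph{Inclusion $Z_{u} \subseteq u/\zeta_{A}$.} This is the substantive direction. Given $Z_{u}$, I define the relation $\alpha = \{(x,y) : m(x,y,u) \in Z_{u}\}$. The first thing to check is that $\alpha$ is a congruence whose $u$-block is $Z_{u}$. A cleaner route is to use the ideal correspondence: show that $Z_{u}$ is closed under the ideal-type conditions, or more directly that $\alpha := \Cg^{A}(\{u\}\times Z_{u})$ has $u/\alpha = Z_{u}$.

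Once this is known, the rest is short. By Proposition~\ref{lem:comgensub}(1), $u/[\alpha,1_{A}]$ is generated under $+_{u}$ by the elements $c(a_{1},a_{2})$ with $a_{1} \in u/\alpha = Z_{u}$ and $c$ absorbing $(u,u)$ to $u$. Each such element equals $u$ by the definition of $Z_{u}$. So $u/[\alpha,1_{A}] = \{u\}$, and because congruences in semiassociative Mal'cev algebras are determined by a single block, $[\alpha,1_{A}] = 0$. Therefore $\alpha \leq \zeta_{A}$, which gives $Z_{u} \subseteq u/\zeta_{A}$.

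\emph{The main obstacle: showing $u/\Cg(\{u\}\times Z_{u}) = Z_{u}$.} To avoid a separate closure argument, I would first try to verify the centrality condition of Lemma~\ref{lem:center} for each pair $(u,a)$ with $a \in Z_{u}$. Given a term $t(x,\bar{y})$ and tuples $\bar{c},\bar{d}$ with $t(u,\bar{c}) = t(u,\bar{d})$, I consider the binary polynomial
\[
c(x,y) := m\bigl(m(t(x,\bar{c}),t(x,\bar{d}),t(u,\bar{d})),\, t(u,\bar{d}),\, u\bigr).
\]
Here $y$ enters as a dummy variable. To make $c$ genuinely absorb $(u,u)$, I would instead encode the switch from $\bar{c}$ to $\bar{d}$ through the second variable, using $m(y,u,\cdot)$-translations of the coordinates, exactly as in the construction of $h$ in the proof of Proposition~\ref{lem:comgensub}. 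The required vanishing at $y = u$ and at $x = u$ then follows from idempotence of $m$ together with $t(u,\bar{c}) = t(u,\bar{d})$. Concluding $c(a,\cdot) \equiv u$ gives $t(a,\bar{c}) = t(a,\bar{d})$, after cancelling with \eqref{eqn:1a}.

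If the multi-variable $\bar{y}$ causes trouble, I would reduce it to a single variable by replacing $t(x,\bar{y})$ with a polynomial of one vector variable handled coordinate-wise via $m$. Getting this absorbing polynomial right is where I expect the real work to lie. Once it is in place, Lemma~\ref{lem:center} yields $(u,a) \in \zeta_{A}$ directly.
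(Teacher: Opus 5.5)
\textbf{There is a genuine gap: the reverse inclusion $Z_{u}\subseteq u/\zeta_{A}$ is not proved.}

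Your inclusion $u/\zeta_{A}\subseteq Z_{u}$ is fine, whether you argue via Proposition~\ref{lem:comgensub}(1) with $[\zeta_{A},1_{A}]=0$ or directly via Lemma~\ref{lem:center}. The converse, which you rightly call the substantive direction, is where both of your strategies stop, exactly at the crux.

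\emph{First strategy.} It needs $u/\Cg^{A}(\{u\}\times Z_{u})\subseteq Z_{u}$, i.e.\ that $Z_{u}$ is already a congruence block. You announce this but never check it.

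\emph{Second strategy.} The polynomial you describe cannot be binary. A single variable $y$ can move every coordinate $c_{i}$ to $d_{i}$ via $m(y,u,c_{i})$ only if the elements $m(d_{i},c_{i},u)$ all coincide. A ``polynomial of one vector variable'' is really $(n+1)$-ary, so the defining condition of $Z_{u}$, which speaks only about binary absorbing polynomials, says nothing about it. Changing one coordinate at a time does not work naively either: the intermediate equalities $t(u,\bar{e}_{k})=t(u,\bar{e}_{k+1})$, where $\bar{e}_{k}=(d_{1},\ldots,d_{k},c_{k+1},\ldots,c_{n})$, are not available.

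The paper avoids this by writing $\zeta_{A}=\{(a,b):[\theta(a,b),\theta(x,y)]=0 \text{ for all } x,y\}$ and invoking Proposition~\ref{lem:comgensub}(2). That is the Aichinger--Mudrinski description of principal commutators (Lemma~\ref{lem:comgen2}), which is precisely what turns the multi-coordinate term condition into a statement about binary absorbing polynomials.

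\textbf{The missing idea} is to use $m$ to make the slice $x=u$ constant before walking through the coordinates. Put $s(x,\bar{y}):=m(t(x,\bar{y}),t(u,\bar{y}),t(u,\bar{c}))$, so that $s(u,\bar{y})=t(u,\bar{c})$ for every $\bar{y}$. Let $s_{k}(x,y):=s(x,d_{1},\ldots,d_{k},y,c_{k+2},\ldots,c_{n})$. Then
\[
c_{k}(x,y):=m\bigl(s_{k}(x,m(y,u,c_{k+1})),\,s_{k}(x,c_{k+1}),\,u\bigr)
\]
absorbs $(u,u)$ to $u$, so $c_{k}(a,m(d_{k+1},c_{k+1},u))=u$ for $a\in Z_{u}$.
\begin{itemize}
\item By \eqref{eqn:1a} and the Mal'cev identities, $m(m(d_{k+1},c_{k+1},u),u,c_{k+1})=d_{k+1}$. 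So the previous equation reads $m(s_{k}(a,d_{k+1}),s_{k}(a,c_{k+1}),u)=u$.
\item Applying $m(\,\cdot\,,u,s_{k}(a,c_{k+1}))$ to both sides gives $s_{k}(a,d_{k+1})=s_{k}(a,c_{k+1})$.
\item Chaining over $k$ gives $s(a,\bar{c})=s(a,\bar{d})$. Using $t(u,\bar{c})=t(u,\bar{d})$, this is $t(a,\bar{c})=t(a,\bar{d})$.
\item Lemma~\ref{lem:center} then yields $(u,a)\in\zeta_{A}$.
\end{itemize}

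Your first strategy can be rescued as well. One-line absorbing-polynomial checks show that $Z_{u}$ is closed under $+_{u}$ and under $e\mapsto m(q(e),q(u),u)$ for unary polynomials $q$. From this one verifies that $\{(x,y):m(y,x,u)\in Z_{u}\}$ is a congruence with $u$-block $Z_{u}$.

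Completed this way, your second route would be more elementary than the paper's, using only the Mal'cev identities and \eqref{eqn:1a} instead of the principal-commutator lemma. As written, though, the reverse inclusion is missing.
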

\begin{proof}
From Lemma~\ref{lem:center}, we can write $\zeta_{A} = \{(a,b) : \forall x, y \in A , [ \theta(a,b), \theta(x,y) ] = 0_{A} \}$. Now follows from Proposition~\ref{lem:comgensub}(2). 
\end{proof}

For groups with multiple operators, the group's neutral element is idempotent and so is the natural choice for $u \in A$ in Proposition~\ref{lem:comgensub} and Corollary~\ref{cor:centchar}; in these cases, we have a useful description of the associated center and commutator ideals.

\begin{definition}
For an algebra $A$ and $u \in A$, let $\mathrm{Ab}^{2}_{u} A = \{ c(x,y) \in \Pol_{2} A : c \text{ absorbs } (u,u) \text{ to } u \}$. 
\end{definition}

Proposition~\ref{lem:comgensub} and Corollary~\ref{cor:centchar} give us the full set of binary absorbing polynomials to generate the commutator or describe the center. For particular algebras of interest, the operations and identities may allows us to make more uniform or simpler choices for $C \subseteq \mathrm{Ab}^{2}_{u} A$ to represent the commutator or center; for example, the singleton $C = \{ [x,y] \}$ in the case of groups and the choice of the neutral element for $u$. By allowing for different possible choices of binary absorbing polynomials in the preamble of Theorem~\ref{thm:grun}, there is an additional utility which allows us to easily recover several previous analogues of Gr\"un's lemma.


\section{Gr\"{u}n's Lemma }\label{section:3}

For any Mal'cev algebra $A$, we define the upper central series in the following manner: set $\zeta_{1} := \zeta_{A}$ and $\zeta_{n+1} \in \Con A$ such that $\zeta_{n+1}/\zeta_{n} := \zeta_{A/\zeta_{n}}$. This produces an ascending series
\begin{align*}
0_{A} \leq \zeta_{A} \leq \zeta_{2} \leq \cdots \leq \zeta_{k} \leq \cdots \leq 1_{A} .
\end{align*}
For the second-center, applying Corollary~\ref{cor:centchar} to $\zeta_{2}(A)/\zeta(A) = \zeta_{A/\zeta(A)}$ yields the following characterization: for all $u \in A$, $a \in u/\zeta_{2}$ if and only if
\begin{align}\label{eqn:11}
\forall c \in \mathrm{Pol}_{2} A , \, c(u/\zeta_{A},A) \subseteq u/\zeta_{A} \wedge c(a,u/\zeta_{A}) \subseteq u/\zeta_{A} \longrightarrow c(a,A) \subseteq u/\zeta_{A} .
\end{align}
We make an additional observation. Fix $u \in A$ and take $c(x,y) \in \mathrm{Ab}^{2}_{u} A$; thus, $c(u,x) = u \wedge c(x,u) = u$. Then for $a \in u/\zeta_{2}$, it follows then that $c$ satisfies $c(u/\zeta_{A},A) \subseteq u/\zeta_{A}$ and $c(A,u/\zeta_{A}) \subseteq u/\zeta_{A}$; thus, by Eq~\eqref{eqn:11} we conclude that 
\begin{align}\label{eqn:111}
c(a,A) \subseteq u/\zeta_{A}    
\end{align}
because $a \in u/\zeta_{2}$.

An algebra is \emph{perfect} if $[1_{A},1_{A}] = 1_{A}$. We are now in a position to prove the general Gr\"un's lemma in semiassociative Mal'cev algebras..

\begin{theorem}\label{thm:grun}
Let $A$ be a perfect semiassociative Mal'cev algebra. Let $u \in A$ and $C, Z \subseteq \mathrm{Ab}^{2}_{u} A$ such that,  
\begin{itemize}

	\item $u/[1_{A},1_{A}]$ is generated by $\{d(a,b) : a,b \in A , d \in C \}$ using the operation $+_{u}$ and $u$, and
	
	\item $u/\zeta_{A} = \{ a : \forall c \in Z, x \in A, c(a,x) = u \}$.

\end{itemize}
The following are equivalent:
\begin{enumerate}

	\item[(G1)] $\zeta_{A} = \zeta_{2}$ ;
	
	\item[(G2)] for all $a \in u/\zeta_{2}$, $c \in Z$, $d \in C$, $c(a,d(x,y)) = u$ and $c(a,x +_{u} y) = c(a,x) +_{u} c(a,y)$.

\end{enumerate}
\end{theorem}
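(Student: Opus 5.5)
We prove the two implications separately; the substantive one is (G2) $\Rightarrow$ (G1).

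\emph{(G1) $\Rightarrow$ (G2).} Assume $\zeta_{A}=\zeta_{2}$ and let $a \in u/\zeta_{2} = u/\zeta_{A}$. Since $Z$ describes the center by hypothesis, $c(a,x)=u$ for every $c\in Z$ and every $x\in A$. Hence $c(a,d(x,y))=u$. Also $c(a,x)+_{u}c(a,y)=m(u,u,u)=u=c(a,x+_{u}y)$. So (G2) holds trivially.

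\emph{(G2) $\Rightarrow$ (G1).} Fix $a\in u/\zeta_{2}$ and $c\in Z$. Since $\zeta_{A}\le\zeta_{2}$ always holds, it suffices to show $a\in u/\zeta_{A}$; the $Z$-description then gives $a \in u/\zeta_A$, and because congruences of a semiassociative Mal'cev algebra are determined by a single block, $\zeta_{2}=\zeta_{A}$. So we must show $c(a,x)=u$ for all $x\in A$.

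\begin{enumerate}
\item Consider $T=\{x\in A : c(a,x)=u\}$.
\item By the first half of (G2), $T$ contains every generator $d(x,y)$ with $d\in C$.
\item $T$ contains $u$, because $c$ absorbs $(u,u)$ to $u$, so $c(a,u)=u$.
\item $T$ is closed under $+_{u}$: if $x,y\in T$, the additivity in (G2) gives $c(a,x+_{u}y)=c(a,x)+_{u}c(a,y)=m(u,u,u)=u$.
\item By the first bullet of the preamble, $u/[1_{A},1_{A}]$ is the closure of these generators under $+_{u}$ and $u$. Hence $u/[1_{A},1_{A}]\subseteq T$.
\item Since $A$ is perfect, $[1_{A},1_{A}]=1_{A}$, so $u/[1_{A},1_{A}]=A$. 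Therefore $T=A$, which is what we needed.
\end{enumerate}

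The main obstacle is making sure the generation in the first preamble bullet uses \emph{only} $+_{u}$ and $u$, with no $-_{u}$ or other operations. Otherwise closure of $T$ under the additional operations would be needed, and (G2) does not provide it. Proposition~\ref{lem:comgensub}(1) shows that $+_{u}$ and $u$ suffice, so the argument closes; I would double-check that the $C$ in the preamble is read in exactly this sense.

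The route above does not actually use Eq.~\eqref{eqn:111} or the fact that $a\in\zeta_{2}$, beyond its role in the hypothesis (G2). Those would only enter if one wanted to verify the (G2) conditions from weaker data, for instance showing $c(a,d(x,y))\in u/\zeta_{A}$ and upgrading it to equality. That is not needed here.
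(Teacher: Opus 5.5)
Your proof is correct. The direction (G2) $\Rightarrow$ (G1) is the paper's argument: the set of $x$ with $c(a,x)=u$ contains $u$ and the $C$-generators, and it is closed under $+_{u}$, so it contains $u/[1_{A},1_{A}]=A$. You justify the final step $a\in u/\zeta_{A}$ by the second preamble bullet, which is the right citation. The paper instead points to Eq~\eqref{eqn:10}, which by itself would require checking every $c\in\mathrm{Ab}^{2}_{u}A$, not only $c\in Z$. Your worry about which operations appear in the generation is settled by the hypothesis itself, which stipulates $+_{u}$ and $u$ only.

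For (G1) $\Rightarrow$ (G2) you take a genuinely different and much shorter route. The paper forms $h(x,y,z)=m(c(x,d(y,z)),d(c(x,y),c(x,z)),u)$ and applies Lemma~\ref{lem:11} with $[\zeta_{A},1_{A}]=0$ to get $c(a,d(y,z))=d(c(a,y),c(a,z))$. It then uses that $\zeta_{A}$ is abelian (so $d$ vanishes on $u/\zeta_{A}$) together with Eq~\eqref{eqn:111}, and leaves the additivity to ``a similar calculation''. You observe instead that $a\in u/\zeta_{2}=u/\zeta_{A}$ already forces $c(a,\cdot)\equiv u$ for every $c\in Z$. Both identities in (G2) then reduce to $u=u$ via $m(u,u,u)=u$.

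The paper's route imitates the classical group-theoretic pattern of elements distributing over commutator operations, and it does not invoke the $Z$-description of the center. That gains nothing here: Corollary~\ref{cor:centchar} gives $c(a,\cdot)\equiv u$ for all $c\in\mathrm{Ab}^{2}_{u}A$ whenever $a\in u/\zeta_{A}$. So your argument works even without the second bullet, and it avoids the commutator machinery entirely.
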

\begin{proof}

(G1) $\Rightarrow$ (G2): Assume $\zeta_{A} = \zeta_{2}$ so that $0_{A/\zeta_{A}} = \zeta_{2}/\zeta_{A} = \zeta_{A/\zeta_{A}}$. Let $a \in u/\zeta_{2} = u/\zeta_{A}$. Take $c(x,y) \in Z$, $d(x,y) \in C$ and note they both absorb $(u,u)$ to $u$. Define polynomial $h(x,y,z):= m( c(x,d(y,z)), d(c(x,y),c(x,z)), u)$; thus, 
\begin{align}\label{eqn:21}
h(a,y,z) + d(c(x,y),c(x,z)) = c(a,d(y,z)) .
\end{align} 
We evaluate
\begin{align*}
h(a,u,y) = m( c(a,d(u,y)), d(c(a,u),c(a,y)), u) &= m(c(a,u),d(u,c(a,y)),u) = u \displaybreak[0]\\
h(u,u,y) = m( c(u,d(u,y)), d(c(u,u),c(u,y)), u) &= m( c(u,u) , d(u,u) ,u) = u \displaybreak[0]\\
h(u,x,y) = m( c(u,d(x,y)), d(c(u,x),c(u,y)), u) &= m( u, d(u,u) , u ) = u . \displaybreak[0]\\
\end{align*}

Since $(a,u,y) \mathrel{\zeta_{A}} (u,u,y) \mathrel{1_{A}} (u,x,y)$ and $a \in u/\zeta_{A}$, we have by Lemma~\ref{lem:11} that 
\begin{align*}
u = m(u,u,u) &= m(h(a,u,y),h(u,u,y),h(u,x,y)) \displaybreak[0]\\
&= h(m(a,u,u),m(u,u,x),m(y,y,y)) = h(a,x,y) . \displaybreak[0]\\
\end{align*}
Then we see from Eq~\eqref{eqn:21} that $c(a,d(y,z)) = d(c(a,y),c(a,z))$. Since $d$ is binary absorbing, for any $x,y \in u/\zeta_{A}$ we have $d(u,u) = u = d(u,y) \rightarrow u= d(x,u) \mathrel{[\zeta_{A},\zeta_{A}]} d(x,y)$; thus, $u = d(x,y)$ since $\zeta_{A}$ is an abelian congruence. Then by Eq~\eqref{eqn:111}, we have $c(a,d(y,z)) = d(c(a,y),c(a,z))=u$ because $a \in u/\zeta_{2}$. A similar calculation for $f(x,y,z) := m(c(x,y +_{u} z),c(x,y) +_{u} c(x,z),u)$ yields $c(a,y) +_{u} c(a,z)$.

(G2) $\Rightarrow$ (G1) To show $\zeta_{2} \leq \zeta_{A}$ it suffices to show $u/\zeta_{2} = u/\zeta_{A}$ since congruences are determined by any block. Take $a \in u/\zeta_{2}$ and let $c \in Z$. We must show that $c(a,x) = u$ for all $x \in A$; in this case, from Eq~\eqref{eqn:10} we conclude that $a \in u/\zeta_{A}$. From the assumptions in (G2), we have $c(a,d(x,y)) =u$ for all $d \in C$ and $c(a,x +_{u} y) = c(a,x) +_{u} c(a,y)$; therefore, $c(a,x) = u$ for all $x \in u/[1_{A},1_{A}] = u/1_{A} = A$ since $u/[1_{A},1_{A}]$ is generated by the set $\{d(x,y) : x,y \in A, d \in C \}$ using the operation $+_{u}$ and constant $u$, and $A$ is perfect.
\end{proof}

In the following corollaries, we recover in a uniform manner some of the unconditional and conditional analogues of Grun's lemma. For each algebra, we use the particular additional identities of the algebra to make simplified choices for the set of binary absorbing polynomials which generate the commutator as in Proposition~\ref{lem:comgensub}(1) or characterize the center as in Corollary~\ref{cor:centchar}.

\begin{corollary}(Gr\"un's Lemma \cite[Satz 4]{Grun})\label{cor:grungrp}
If $G$ is a perfect group, then $Z_{2}(G) = Z(G)$.
\end{corollary}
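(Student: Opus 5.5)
We deduce the corollary from Theorem~\ref{thm:grun} by verifying (G2) for a perfect group $G$. Regard $G$ as a semiassociative Mal'cev algebra with $m(x,y,z) = xy^{-1}z$ and take $u = 1$. Then $x +_{u} y = xy$, and $[1_{G},1_{G}]$ corresponds to the derived subgroup $G'$, so perfectness in the paper's sense is $G' = G$.

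For the two sets of absorbing polynomials, choose $C = Z = \{[x,y]\}$ with $[x,y] = x^{-1}y^{-1}xy$; this polynomial absorbs $(1,1)$ to $1$. Both hypotheses of the theorem then hold:
\begin{itemize}
\item \emph{Generation of the commutator.} $u/[1_{G},1_{G}] = G'$ is generated as a group by the commutators $[a,b]$. Moreover $[a,b]^{-1} = [b,a]$, so the generating set is closed under inverses, and products plus $1$ suffice.
\item \emph{Description of the center.} $u/\zeta_{G} = Z(G) = \{a : [a,x] = 1 \ \forall x\}$. This is the standard fact that the TC-center of a group is the group center, and it also follows from Corollary~\ref{cor:centchar}.
\end{itemize}
Consequently $u/\zeta_{2}$ is the usual second center $Z_{2}(G)$.

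It remains to prove (G2): for $a \in Z_{2}(G)$ and all $x,y,z$, we need $[a,yz] = [a,y][a,z]$ and $[a,[x,y]] = 1$.

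\emph{Multiplicativity.} The identity $[a,yz] = [a,z]\,[a,y]^{z}$ holds in any group. Since $[a,y] \in Z(G)$, we have $[a,y]^{z} = [a,y]$, and centrality lets us reorder, giving $[a,yz] = [a,y][a,z]$. So $x \mapsto [a,x]$ is a homomorphism $G \to Z(G)$. Its kernel contains $G'$, because $Z(G)$ is abelian.

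\emph{Vanishing on commutators.} Hence $[a,[x,y]] = 1$ for every commutator. This step could alternatively use the three subgroups lemma, but the homomorphism argument is simpler.

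\emph{Conclusion.} With (G2) verified, Theorem~\ref{thm:grun} yields (G1), that is, $\zeta_{2} = \zeta_{G}$, which is $Z_{2}(G) = Z(G)$.

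The only real subtlety is matching the paper's notions to the classical ones: the TC-center equals the group center, $\zeta_{2}$ is the usual second center, and $[1,1]$ corresponds to $G'$. These identifications follow from Lemma~\ref{lem:center} together with the absorbing-polynomial descriptions in Proposition~\ref{lem:comgensub} and Corollary~\ref{cor:centchar}.
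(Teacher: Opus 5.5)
Your proposal is correct and follows essentially the same route as the paper: the same choice $C = Z = \{[x,y]\}$ with $u=1$, and verification of (G2) via the identity $[a,yz]=[a,z][a,y]^{z}$ together with $[a,G]\subseteq Z(G)$, which makes $x\mapsto[a,x]$ multiplicative into an abelian group and hence forces $[a,[x,y]]=1$. You also spell out why the two hypotheses of Theorem~\ref{thm:grun} hold for groups (commutators are closed under inverses, so products and $1$ suffice; the TC-center is $Z(G)$), which the paper leaves implicit.
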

\begin{proof}
Take $C=Z = \{ [x,y] \}$ and verify condition (G2) in Theorem~\ref{thm:grun} - this is the original calculation. For the choice $u=1$ of the neutral element in the group, $u/\zeta_{G} = Z(G)$ and $a/\zeta_{2} = Z_{2}(G)$ the second-center. For $a \in Z_{2}(G)$, we have $[a,G] \subseteq Z(G)$. Then apply the commutator identity 
\begin{align}\label{eqn:keygrp}
[a,xy] = [a,x][a,y]^{x} = [a,x][a,y] = [a,y][a,x] = [a,yx];
\end{align}
therefore, $[a,[x,y]] = 1$. 
\end{proof}

They key commutator identity for groups used in Eq~\eqref{eqn:keygrp} is one of the defining identities for the bracket operation in multiplicative Lie algebras; therefore, it is reasonable to expect the unrestricted analogue to hold. A \emph{multiplicative Lie algebra} is an algebra $\left\langle A, \cdot, 1, \{~,~\} \right\rangle$ where $\left\langle A, \cdot, 1 \right\rangle$ is a group with neutral 1 and the binary operation $\{~,~\}: A \times A \to A$ satisfies the identities 
	\begin{equation}\label{(2.1)}
		\{x,x\} = 1
	\end{equation}
	\begin{equation}\label{(2.2)}
		\{x , y\cdot z\} = \{x,y\}\cdot {\{x,z\}^y}
	\end{equation}
	\begin{equation}\label{(2.3)} 
		\{ x\cdot y, z\} = {\{y, z\}^x}\cdot \{x, z\}
	\end{equation}
	\begin{equation}\label{(2.4)}
		\{\{x, y\},{z^y}\} \cdot \{ \{ y, z\}, {x^z}\} \cdot \{ \{ z, x\},{y^x}\} = 1
	\end{equation}
	\begin{equation}\label{(2.5)}
		\{x, y\}^z = \{ {x^z},{y^z} \}
	\end{equation}
for all $x,y,z \in A$ where $x^z = z \cdot x \cdot z^{-1}$. These identities imply the additional 
\begin{align}\label{eqn:899}
\{x, [y,z]\} &= [x,\{y,x\}]
\end{align}
\begin{align}\label{eqn:inversebrack}
[x,y]^{-1} &= [y,x]
\end{align}
as pointed out by \cite[pg. 3]{GJ11}.

\begin{corollary}\label{cor:14}
If $A$ is a perfect multiplicative Lie algebra, then $\zeta_{2} = \zeta_{A}$. 
\end{corollary}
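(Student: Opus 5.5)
We apply Theorem~\ref{thm:grun} with $u = 1$, the neutral element of the group reduct, which is idempotent for all operations. The Mal'cev term is $m(x,y,z) = x y^{-1} z$, which is semiassociative, and $x +_{1} y = xy$. We choose $C = \{[x,y],\ \{x,y\}\}$ and $Z = \{[x,y],\ \{x,y\}\}$, where $[x,y]$ denotes the group commutator. Both brackets absorb $(1,1)$ to $1$: for $\{~,~\}$ this follows from \eqref{(2.2)} and \eqref{(2.3)}, since taking $z=1$ or $y=1$ forces $\{x,1\}=1=\{1,x\}$.

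The first step is to justify the preamble hypotheses. Since $A$ is an expanded group, ideals are normal subgroups closed under the bracket operation in the appropriate sense. The commutator ideal $1/[1_A,1_A]$ is the normal subgroup generated by all $[x,y]$ and $\{x,y\}$; this is the standard description of the commutator in multiplicative Lie algebras, obtainable from Proposition~\ref{lem:comgensub}(1) via the identities. Because conjugates of commutators are products of commutators, $[x,y]^z = [z,x]^{-1}[z,xy]$ by \eqref{eqn:inversebrack}, and by \eqref{(2.5)} conjugates of brackets are brackets, this normal subgroup is in fact generated as a subgroup, and hence under $+_1 = \cdot$, by the values of $C$. We must also account for inverses. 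Here $[x,y]^{-1}=[y,x]$, and $\{x,y\}^{-1}=\{y,x\}$ follows from \eqref{(2.1)} together with \eqref{(2.2)}/\eqref{(2.3)}. So the values of $C$ are closed under inverses, and generation under $\cdot$ and $1$ alone suffices. Similarly, the center is $\{a : a \in Z(A,\cdot),\ \{a,x\}=1 \ \forall x\}$, which is the description with $Z$. This can be checked from Corollary~\ref{cor:centchar} using the fact that every binary absorbing polynomial in an expanded group is built from these operations; citing the standard description of the center of a multiplicative Lie algebra is also acceptable.

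The second step verifies (G2), which gives (G1). Let $a \in 1/\zeta_2$. By Eq~\eqref{eqn:111}, $[a,A]$ and $\{a,A\}$ lie in the center. Additivity for $c=[x,y]$ is exactly the computation \eqref{eqn:keygrp}, and for $c=\{x,y\}$ it follows from \eqref{(2.2)}: $\{a,yz\}=\{a,y\}\{a,z\}^y=\{a,y\}\{a,z\}$, since $\{a,z\}$ is central. Consequently $y\mapsto c(a,y)$ is a homomorphism into the abelian group $Z(A,\cdot)$, and $c(a,yz)=c(a,zy)$.

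It remains to show $c(a,d(x,y))=1$. For $d=[x,y]$ this follows from the homomorphism property as in Corollary~\ref{cor:grungrp}: $c(a,[x,y])=c(a,x)c(a,y)c(a,x)^{-1}c(a,y)^{-1}=1$. The main obstacle is $d=\{x,y\}$, because a homomorphism into an abelian group need not kill brackets. For $c = [~,~]$, the identity \eqref{eqn:899}, $[a,\{y,z\}]$ rewritten in terms of $\{a,[y,\cdot]\}$-type expressions, should convert this case to one already known to vanish. For $c=\{~,~\}$, we would use the Jacobi-type identity \eqref{(2.4)} with $x=a$. The terms $\{\{z,a\},y^a\}$ and $\{\{a,y\},z^y\}$ involve brackets with central elements $\{z,a\}=\{a,z\}^{-1}$ and $\{a,y\}$. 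These are trivial because central elements $w$ satisfy $\{w,A\}=1$ by the description of $Z$. Hence \eqref{(2.4)} gives $\{\{y,z\},a^z\}=1$, and inverting yields $\{a^z,\{y,z\}\}=1$. Then \eqref{(2.5)}, conjugating by $z^{-1}$, together with $a^{z}\in a\,\zeta$-class manipulation, reduces this to $\{a,\{y,z\}^{z^{-1}}\}=\{a,\{y^{z^{-1}},z\}\}=1$. Since $y$ is arbitrary, this gives $\{a,\{x,y\}\}=1$. This final bookkeeping with conjugations is the step I expect to need the most care.
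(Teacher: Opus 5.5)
Your proposal is correct and is essentially the paper's proof. It applies Theorem~\ref{thm:grun} with $u=1$ and $C=Z=\{[x,y],\{x,y\}\}$, and then:
\begin{itemize}
\item additivity comes from \eqref{(2.2)} together with the centrality of $[a,A]$ and $\{a,A\}$;
\item the case $d=[x,y]$ is handled by the homomorphism argument;
\item the case $c=[\,,]$, $d=\{\,,\}$ uses the identity $[a,\{x,y\}]=\{a,[x,y]\}$, which is the intended reading of \eqref{eqn:899} (as printed it has a typo);
\item the case $c=d=\{\,,\}$ uses the Jacobi identity \eqref{(2.4)}.
\end{itemize}
The differences are only cosmetic:
\begin{itemize}
\item You insert $a$ directly into \eqref{(2.4)} and conjugate back via \eqref{(2.5)}, whereas the paper writes $a=z^{y}$ with $z\in\zeta_{2}(A)$ using normality. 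Your remark that $a^{z}$ lies in the $\zeta$-class of $a$ is unnecessary.
\item You additionally check that the generators are closed under inverses. Your displayed conjugation formula actually computes $[z,y]^{x}$, not $[x,y]^{z}$, but $g[x,y]g^{-1}=[gxg^{-1},gyg^{-1}]$ gives the claim directly.
\end{itemize}
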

\begin{proof}
According to \cite[Prop 2.6]{mLa}, we may take $C = Z = \{ [x,y], \{x,y\} \}$ since the commutator ideal is generated as a subgroup from the group commutators and Lie brackets. For convenience write $\zeta(A) = 1/\zeta$ and $\zeta_{2}(A) = 1/\zeta_{2}$ for the associated ideals. Since both group commutator and Lie bracket absorb the neutral element, we have 
\begin{align}\label{eqn:abs5}
[a,A] \subseteq \zeta(A) \quad \quad \text{ and } \quad \quad \{ a,A \} \subseteq \zeta(A)  
\end{align}
for $a \in \zeta_{2}(A)$. As in the group case, Eq~\eqref{(2.2)} and Eq~\eqref{eqn:abs5} yields $[a,x \cdot y] = [a,x] \cdot [a,y] = [a, y \cdot x]$ and $\{a,x \cdot y\} = \{a,x\} \cdot \{a,y\} = \{a, y \cdot x\}$; thus, $[a,[x,y]]= 1$ and $\{a,[x,y]\} = 1$. Then using Eq~\eqref{eqn:899} we have $[a,\{x,y\}] = \{a,[x,y]\} = 1$. For the last case, we take $z \in \zeta_{2}(A)$ such that $a = z^{y}$. Then using the nonabelian Jacobi identity Eq~\eqref{(2.4)} and Eq~\eqref{eqn:inversebrack} we have
\begin{align*}
\{a,\{ x, y, \} \} = \{z^{y},\{ x, y, \} \} = \{ \{ y , z \} , x^{y} \} \cdot \{ \{ z , x \} , y^{x} \} = 1
\end{align*}
since $\{ z , x \}, \{ y , z \} \in \zeta(A)$. Altogether, condition (2) in Theorem~\ref{thm:grun} is verified. 
\end{proof}




A \emph{skew brace} is an algebra $\left\langle A, \cdot, \circ, \right\rangle$ with group operations $\cdot$ and $\circ$ satisfying 
\begin{align}\label{eqn:897}
x \circ (y \cdot z) = (x \circ y) \cdot x^{-1} \cdot (x \circ z) 
\end{align}
where $x^{-1}$ is the inverse for $\cdot$. It follows that both groups operations share the same unique neutral element $1 \in A$. In the study of skew braces, the term operation 
\begin{align}\label{eqn:766}
x \ast y = x^{-1} \cdot (x \circ y) \cdot y^{-1} 
\end{align}
provides a measure for the ``discrepancy'' between the two groups; for example, $x \ast y=1$ if and only if the group operations are equal and $x \ast y = [x^{-1},y]$ if and only if the group operations are opposites. Directly from Eq~\eqref{eqn:897} and Eq~\eqref{eqn:766} we have the following useful identities 
\begin{align}\label{eqn:200}
a \ast (x \cdot y) &= (a \ast x) \cdot x \cdot (a \ast y) \cdot x^{-1} \\ \label{eqn:201}
(x \circ y) \ast a &= (x \ast (y \ast a)) \cdot (y \ast a) \cdot (x \ast a) .
\end{align}
It is also clear that any two operations from $\{ \cdot, \circ, \ast \}$ define the other operation.

The commutator in skew-braces can be calculated explicitly by the two group commutators $[x,y]$, $[x,y]_{\circ}$ and $x \ast y$. If we write $I = 1/\alpha$ and $J = 1/\beta$ for the ideals associated to $\alpha, \beta \in \Con A$, then the ideal $[I,J]^{A} := 1/[\alpha,\beta]$ associated to the commutator is the ideal generated by the set $\{ [a,b], [a,b]_{\circ} ,a \ast b :  a \in I, b \in J \}$ \cite{SKB}. For $X,Y \subseteq A$, let $[X,Y]$ denote the commutator subgroup and $X \ast Y$ the subgroup generated by $\{ x \ast y :  x \in X, y \in Y\}$ in $\left\langle A, \cdot \right\rangle$, and let $[X,Y]_{\circ}$ denote the commutator subgroup generated in $\left\langle A, \circ \right\rangle$. A rather deep result from \cite[Prop 3.3]{ballister1} is that the commutator can also be computed by
\begin{align}\label{eqn:commeq}
[A,A]^{A} =  (A \ast A)  \cdot [A,A] =  (A \ast A) \circ [A,A]_{\circ} .
\end{align}
The next result essentially follows \cite[Thm A]{many1} which itself relies on \cite[Cor 2.6]{Tsang} but introduces calculations with trifactorized groups; however, the present argument  highlights the importance of the derivations in \cite{Tsang} and the specialized representation of the commutator in \cite[Prop 3.3]{ballister1}. Set $Z(A) = 1/\zeta_{A}$ and $Z_{2}(A) = 1/\zeta_{2}$ for the ideals associated to the center and second-center, respectively.

\begin{corollary}\label{cor:perfgrun}(see \cite[Thm A]{many1}, \cite[Cor 1.8]{Tsang})
Let $\left\langle A \cdot , \circ \right\rangle$ be a perfect skew brace. The following are equivalent:
\begin{enumerate}

	\item $Z_{2}(A) = Z(A)$;
	
	\item for all $a \in Z_{2}(A)$, the map $i_{a}(x) = a \circ x \circ \bar{a}$ is an automorphism of $\left\langle A, \cdot \right\rangle$ where $\bar{a}$ is the inverse in $\circ$; 
	
	\item $Z_{2}(A)$ is contained in the center of the group $\left\langle A, \circ \right\rangle$;
	
	\item for all $a \in Z_{2}(A)$, $[a,x \cdot y]_{\circ} = [a,x]_{\circ} \cdot [a,y]_{\circ}$ and 
\begin{align*} 
[Z_{2}(A),[A,A]]_{\circ} = [Z_{2}(A),A \ast A]_{\circ} = 1.
\end{align*}
\end{enumerate}
\end{corollary}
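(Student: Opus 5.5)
The plan is to apply Theorem~\ref{thm:grun} with $u=1$, using for $C$ a generating family adapted to Eq~\eqref{eqn:commeq}, and for $Z$ a description of $Z(A)$ by absorbing binary polynomials. Since every element of $[A,A]^{A}$ is a product $a \cdot b$ with $a \in A\ast A$ and $b\in [A,A]$, we take $C=\{x\ast y,\ [x,y]\}$. For $Z$ we take $\{x\ast y,\ [x,y],\ [x,y]_{\circ}\}$. Then $a\in Z(A)$ if and only if $a\in \mathrm{Soc}(A)\cap Z(A,\circ)$, which says exactly that all three vanish for all $x$. Note that $+_{1}$ here is the group operation $\cdot$, because $m(x,y,z)=x\cdot y^{-1}\cdot z$ is a Mal'cev term with $x+_1 y = x\cdot y$. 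The cycle of implications is $(1)\Rightarrow(3)\Rightarrow(2)\Rightarrow(4)\Rightarrow(1)$, with the last step routed through (G2)$\Rightarrow$(G1).

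\textbf{The implications into (3) and (2).}
\begin{itemize}
\item[$(1)\Rightarrow(3)$] This is trivial, since $Z(A)\subseteq Z(A,\circ)$.
\item[$(3)\Rightarrow(2)$] If $a$ is $\circ$-central, then $i_a=\id$, which is an automorphism.
\end{itemize}

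\textbf{$(2)\Rightarrow(4)$.} Fix $a\in Z_2(A)$. By Eq~\eqref{eqn:111}, $[a,x]_\circ$, $a\ast x$ and $[a,x]$ all lie in $Z(A)$. We write $[a,x]_\circ=a\circ x\circ\bar a\circ\bar x = i_a(x)\circ \bar x$. Since $[a,x]_\circ$ is central in both groups and in $\ast$-annihilation, we have $i_a(x)=[a,x]_\circ\circ x=[a,x]_\circ\cdot x$, using that for socle elements $z$ one has $z\circ x=z\cdot(z\ast x)\cdot x = z\cdot x$. Hence
\[
[a,xy]_\circ=i_a(xy)\cdot(xy)^{-1} = i_a(x)\,i_a(y)\,y^{-1}x^{-1}.
\]
By (2) and the centrality of $[a,y]_\circ$ in $\cdot$, this equals $[a,x]_\circ[a,y]_\circ$. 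Then $x\mapsto [a,x]_\circ$ is a $\cdot$-homomorphism into the abelian group $Z(A)$, so it kills $[A,A]$. To kill $A\ast A$ we expand $[a,x\ast y]_\circ$ by writing $x\ast y = x^{-1}(x\circ y)y^{-1}$ and using the homomorphism property. This reduces it to
\[
[a,x\circ y]_\circ\,[a,x]_\circ^{-1}[a,y]_\circ^{-1}.
\]
That expression is trivial because, modulo $Z(A)$, the map $[a,-]_\circ$ is also a $\circ$-homomorphism into the center, by the usual group identity $[a,x\circ y]_\circ=[a,x]_\circ\circ {}^{x}[a,y]_\circ$ with a central second factor.

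\textbf{$(4)\Rightarrow(1)$ via Theorem~\ref{thm:grun}.} We verify (G2) for each $c\in Z$.
\begin{itemize}
\item For $c=[\,,]_\circ$, (G2) is exactly hypothesis (4).
\item For $c=[\,,]$, we follow Corollary~\ref{cor:grungrp}: $[a,xy]=[a,x][a,y]$, since $[a,A]\subseteq Z(A)$. This gives $[a,[x,y]]=1$, and $[a,x\ast y]=1$ by expanding $x\ast y$ multiplicatively once $[a,-]$ is a homomorphism into an abelian group that is additionally invariant in $\circ$.
\item For $c=\ast$, Eq~\eqref{eqn:200} with $a\ast x$ central gives $a\ast(xy)=(a\ast x)(a\ast y)$. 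Then $a\ast[x,y]=1$ follows from commutativity of $Z(A)$.
\end{itemize}

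The main obstacle is $a\ast(x\ast y)=1$. Here we plan to use Eq~\eqref{eqn:201} together with the $\circ$-centrality of $Z_2$ modulo $[A,A]$ from (4), in the manner of Tsang's derivations. The idea is to rewrite $a\ast(x\ast y)$ through $a\circ(\cdot)$ and $i_a$, and to show that $i_a$ acts trivially on $A\ast A$. This follows from $[a,A\ast A]_\circ=1$, which is part of (4), since $i_a(z)=[a,z]_\circ\circ z$. We then use $a\ast z = a^{-1}(a\circ z)z^{-1}$, and the fact that $a\circ z = i_a(z)\circ a$, to reduce $a\ast z$ to an expression in $a\ast(\cdot)$ evaluated on a $\circ$-conjugate. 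By \cite[Prop 3.3]{ballister1} these conjugates stay inside $A\ast A$, and the homomorphism property should then finish the argument.
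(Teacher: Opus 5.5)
Your implications $(1)\Rightarrow(3)\Rightarrow(2)\Rightarrow(4)$ are fine. In particular, your direct computation for $(2)\Rightarrow(4)$, based on $i_a(x)=[a,x]_\circ\cdot x$, is correct and does not even use perfectness; the paper instead gets (4) from (1) through (G1)$\Rightarrow$(G2).

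The gap is in $(4)\Rightarrow(1)$. With your choice $C=\{x\ast y,[x,y]\}$, condition (G2) for $c=\ast$ and $c=[\ ,\ ]$ with $d=\ast$ demands $a\ast(x\ast y)=1$ and $[a,x\ast y]=1$ for every $a\in Z_2(A)$. You prove neither.

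For the second identity you only assert that $[a,-]$ is ``invariant in $\circ$'', i.e.\ $[a,x\circ y]=[a,x][a,y]$. Since $x\circ y=x\cdot(x\ast y)\cdot y$, that assertion is equivalent to the claim itself.

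For the first identity your sketch does not close. For $z\in A\ast A$, (4) gives $a\circ z=z\circ a$, and then $a\ast z=a^{-1}\cdot(z\circ a)\cdot z^{-1}=(z\ast a)\cdot a^{-1}zaz^{-1}$. So you have only exchanged $Z_2(A)\ast(A\ast A)=1$ for $(A\ast A)\ast Z_2(A)=1$. By Eq~\eqref{eqn:Tsang1} the latter is a reformulation of condition (2), which your argument does not extract from (4). The multiplicativity $a\ast(xy)=(a\ast x)(a\ast y)$ cannot help either, since it only kills products of group commutators, and $x\ast y$ is not one.

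The missing idea is the $\lambda$-action. Put $\lambda_x(y)=x^{-1}\cdot(x\circ y)$, so that $x\ast y=\lambda_x(y)\cdot y^{-1}$. Then $\lambda\colon\langle A,\circ\rangle\to\Aut\langle A,\cdot\rangle$ is a homomorphism, $\lambda_z=\id$ for $z\in\mathrm{Soc}(A)$, and each $\lambda_x$ fixes $Z(A)$ pointwise. For $a\in Z_2(A)$, Eq~\eqref{eqn:111} puts $[a,x]_\circ$, $a\ast y$ and $x\ast a$ in $Z(A)$; the last uses the absorbing polynomial $(x,y)\mapsto y\ast x$. From $a\circ x=[a,x]_\circ\circ x\circ a$ and $\lambda_{[a,x]_\circ}=\id$ you get $\lambda_a\lambda_x=\lambda_x\lambda_a$. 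Using also $\lambda_a(y)=(a\ast y)\cdot y$,
\[
\lambda_a(x\ast y)=\lambda_x(\lambda_a(y))\cdot\lambda_a(y)^{-1}=(a\ast y)\cdot\lambda_x(y)\cdot y^{-1}\cdot(a\ast y)^{-1}=x\ast y ,
\]
that is, $a\ast(x\ast y)=1$. Similarly, apply $\lambda_x$ to $a\cdot y\cdot a^{-1}=[a,y]\cdot y$ and use $\lambda_x(a)=(x\ast a)\cdot a$ with $x\ast a$ central. This gives $[a,\lambda_x(y)]=[a,y]$, hence $[a,x\ast y]=[a,\lambda_x(y)]\cdot[a,y]^{-1}=1$.

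These are exactly Eq~\eqref{eqn:a1} and Eq~\eqref{eqn:a2} (Tsang's Props 2.2 and 2.3), valid in every skew brace, which the paper simply cites. With them in place your route works. It has the merit of avoiding Tsang's criterion Eq~\eqref{eqn:Tsang1}, on which the paper's $(2)\Rightarrow(1)$ depends. The paper closes its cycle differently: $(4)\Rightarrow(3)$ via perfectness and Eq~\eqref{eqn:commeq}, then $(3)\Rightarrow(2)\Rightarrow(1)$.
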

\begin{proof}
$(1) \Rightarrow (4)$: Assume $Z_{2}(A) = Z(A)$. Take $C=Z = \{[x,y],[x,y]_{\circ}, x \ast y\}$ in Theorem~\ref{thm:grun}. Then condition (G2) applied to the commutator $[x,y]_{\circ}$ yields the left-distributivity of $[a,x]_{\circ}$ over $x \cdot y, [x,y]$ and $x \ast y$ for $a \in Z_{2}(A)$. The statements in (4) follow.

$(4) \Rightarrow (3)$: Using the conditions in (4) and the representation of the commutator in Eq~\eqref{eqn:commeq}, we can write
\begin{align*}
[Z_{2}(A), A]_{\circ} = [Z_{2}(A), [A,A]^{A}]_{\circ} = [Z_{2}(A), (A \ast A) \cdot [A,A]]_{\circ} = 1
\end{align*}
because $A$ is perfect; thus, elements in $Z_{2}(A)$ commute with the $\circ$ operation.

$(3) \Rightarrow (2)$: In this case, we have $i_{a}(x) = \id$ for all $a \in Z_{2}(A)$.

$(2) \Rightarrow (1)$: We argue the same as in \cite[Prop 5.3]{many1}. This implication relies principally on \cite[Cor 2.6]{Tsang} where it shown that 
\begin{align}\label{eqn:Tsang1}
\forall a \in Z_{2}(A) \, , \, i_{a} \in \Aut \left\langle A, \cdot \right\rangle \quad \Leftrightarrow  \quad (A \ast A) \ast Z_{2}(A) = 1 
\end{align}
for any skew braces. It is also shown that for any skew braces we always have 
\begin{align}\label{eqn:a1}
Z_{2}(A) \ast (A \ast A) &= 1 &(\text{ \cite[Prop 2.2]{Tsang} }) \\ \label{eqn:a2}
[ Z_{2}(A), A \ast A ] &= 1 &(\text{ \cite[Prop 2.3]{Tsang} }) 
\end{align}
For $a \in Z_{2}(A)$, we have $a \ast A, A \ast a \subseteq Z_{A}$; thus, Eq~\eqref{eqn:200} and Eq~\eqref{eqn:201} take the form $a \ast (x \cdot y) = (a \ast x) \cdot (a \ast y)$ and $(x \circ y) \ast a = (x \ast a) \cdot (y \ast a)$ which imply
\begin{align}\label{eqn:3456}
Z_{2}(A) \ast [A,A] &= 1 \\ \label{eqn:3457}
[A, A]_{\circ} \ast Z_{2}(A) &= 1 .
\end{align}
By the representations in Eq~\eqref{eqn:commeq}, using Eq~\eqref{eqn:a1} and Eq~\eqref{eqn:3456} in the first line, and Eq~\eqref{eqn:Tsang1} and Eq~\eqref{eqn:3457}  in the second line we conclude that 
\begin{align*}
Z_{2}(A) \ast A &= Z_{2}(A) \ast [A,A]^{A} = Z_{2}(A) \ast ( (A \ast A) \cdot [A,A] ) = 1 \\
A \ast Z_{2}(A) &= [A,A]^{A} \ast Z_{2}(A) = ( (A \ast A ) \circ [A,A]_{\circ} ) \ast Z_{2}(A) =1 . 
\end{align*}
Using the group calculation $[Z_{2}(A), [A, A]] = 1$ and Eq~\eqref{eqn:a2} we see that elements in $Z_{2}(A)$ commute with the $\cdot$ operation. Since it follows from Eq~\eqref{eqn:766} that $\{a \in A : a \ast x = 1 = x \ast a \} \cap Z(A, \cdot) \subseteq Z(A,\circ)$, we have shown $Z_{2}(A) \subseteq Z(A)$. 
\end{proof}


\section{Discussion}\label{sec:discussion}

We know that groups and multiplicative Lie algebras satisfy an unconditional Gr\"un's lemma, but skew braces do not. Is it possible to arrive at a more general understanding or characterization of algebras which do ?

\begin{problem}
Which perfect semiassociative Mal'cev algebras satisfy an unconditional Gr\"un's lemma ?  
\end{problem}

Corollary~\ref{cor:perfgrun}(2) illustrates that further refinements of the characterization in Theorem~\ref{thm:grun}(G2) for particular algebras are possible and most likely require a deep understanding of the available identities and representations of the commutator.

\begin{problem}\label{prob:2}
For those perfect semiassociative Mal'cev algebras which do not satisfy an unconditional Gr\"un's lemma, develop a refined characterization of when the upper-central series stabilizes; in particular, when the second-center and center coincide ?  
\end{problem}

According to J.D.H. Smith \cite[Thm 4.3]{smith}, conformal algebras $C$ correspond to two-sorted conformal algebras of the form $(C,\mathds{Z})$, and the class of two-sorted conformal algebras are equivalent to a Mal'cev variety of equational conformal algebras \cite[Thm 5.3, Prop 5.4]{smith} ; in particular, they are expansions of an abelian group operation and so are examples of semiassociative Mal'cev algebras for which Theorem~\ref{thm:grun} applies. We may consider the above problems for conformal algebras in terms of their equivalent formulation as equational conformal algebras.

\begin{problem}
Do conformal algebras satisfy an unconditional Gr\"un's lemma ? If not, give a refined characterization of when the second-center and center coincide for perfect conformal algebras. 
\end{problem}

Two-sorted vertex algebras do not form a Mal'cev variety of two-sorted algebras \cite[Thm 6.8]{smith}; nevertheless, they are constructed from sorts of abelian groups and certain lattice-ordered rings. Is it possible to adapt to vertex algebras the results of this manuscript ?

\begin{acknowledgments}
The research in this manuscript was supported by NSF China Grant \#12071374.
\end{acknowledgments}


\end{document}